\documentclass[12pt,oneside,reqno]{amsart}
\usepackage[utf8]{inputenc}
\usepackage{amssymb}
\usepackage{geometry}
\geometry{left=2.5cm,right=2.5cm}
\usepackage{amsmath,amscd}
\usepackage{graphicx}
\usepackage{color,hyperref}
\usepackage{tikz}
\usepackage{tikz-cd}
\usepackage{amsfonts}
\usetikzlibrary{matrix,arrows}
\usepackage{mathrsfs}
\usepackage{amssymb}
\usepackage{adjustbox}
\usepackage{physics}
\begin{document}
	
	\newcommand{\mf}{\mathfrak}
	\newcommand{\mc}{\mathcal}
	\newcommand{\mb}{\mathbf}
	
	\newcommand{\R}{\mathbf R}
	\newcommand{\C}{\mathbf C}
	\newcommand{\Q}{\mathbf Q}
	\newcommand{\Z}{\mathbf Z}
	\newcommand{\F}{\mathbf F}
	\newcommand{\N}{\mathbf N}

	\newcommand{\Fix}{\textnormal{Fix}}
	\newcommand{\End}{\textnormal{End}}
	\newcommand{\Frob}{\textnormal{Frob}}

	\newcommand{\sm}[4]{{
			\left(\begin{smallmatrix}{#1}&{#2}\\{#3}&{#4}\end{smallmatrix}\right)}}
	\newcommand{\sv}[2]{
		\genfrac(){0pt}{1}{#1}{#2}}

	\numberwithin{equation}{section}
	
	\theoremstyle{plain}
	\newtheorem{theorem}{Theorem}
	\newtheorem{lemma}[theorem]{Lemma}
	\newtheorem{proposition}[theorem]{Proposition}
	\newtheorem{corollary}[theorem]{Corollary}
	\newtheorem*{t2}{Theorem 2}
	\newtheorem*{p1}{Proposition 1}
	
	\theoremstyle{definition}
	\newtheorem{definition}[theorem]{Definition}
	\newtheorem{example}[theorem]{Example}
	\newtheorem{conjecture}[theorem]{Conjecture}
	\newtheorem{remark}[theorem]{Remark}

	\title{On the p-rank of singular curves and their smooth models}
	\author{SADIK TERZ\.{I}}
	
	\address{Middle East Technical University (NCC), Mathematics Department, Kalkanli, KKTC, Mersin 10, Turkey.}
	\email{terzi@metu.edu.tr}
	
	
	\subjclass[2010]{Primary 14G17, 14H20, 14M10; Secondary 14H40, 14H50}
	
	\keywords{$p$-rank, Jacobian, complete intersections.}
	
	\maketitle
	
	\begin{abstract}
		In this paper, we are concerned with the computation of the $p$-rank and $a$-number of singular curves and their smooth model. We consider a pair $X, X'$ of proper curves over an algebraically closed field $k$ of characteristic $p$, where $X'$ is a singular curve which lies on a smooth projective variety, particularly on smooth projective surfaces $S$ (with $p_g(S) = 0 = q(S)$) and $X$ is the smooth model of $X'$. We determine the $p$-rank of $X$ by using the exact sequence of group schemes relating the Jacobians $J_X$ and $J_{X'}$. As an application, we determine a relation about the fundamental invariants $p$-rank and $a$-number of a family of singular curves and their smooth models. Moreover, we calculate $a$-number and find lower bound for $p$-rank of a family of smooth curves.  
	\end{abstract}
	
	\vskip 0.5cm
	\section{Introduction}
Let $X$ be a smooth projective  curve of genus $g\geq 2$ over an algebraically closed field $k$ of  characteristic  $p > 0$ and $J_X$ be its Jacobian. The $p$-rank $\sigma(X)$ and $a$-number $a(X)$ are fundamental invariants of $X$(see Definition \ref{od} below), and have been studied extensively by determining the action of the Frobenius map on the cohomology group $H^1(X, \mc{O}_X)$ or equivalently the action of the Cartier operator on $H^0(X, \Omega_X)$. In the former case one essentially determines the Hasse-Witt matrix (\cite{M}) and in the latter the Cartier-Manin matrix (\cite{Y})  describing the action. For a more extensive bibliography on Hasse-Witt and Cartier-Manin matrices we refer to \cite{A}.  We are interested in determining the $p$-rank $\sigma (X)$ and $a$-number $a(X)$ for curves in certain varieties. 
	\par If $\pi : X \to X'$ is the resolution of singularities of a curve $X'$ lying on a smooth variety, especially on smooth projective surface $S$, then in principle one can determine  $\sigma(X)$ by computing
	the action of Frobenius on $H^1(X', \mc{O}_{X'})$ and relating this computation to $X$ via the cohomology 
	sequence attached to the resolution. In particular, if  $p_g(S) = 0 = q(S)$ where $p_g(S)  \text{ and } q(S)$ are the geometric genus and the irregularity of $S$ respectively, the method is quite effective because in this case the action of Frobenius can be easily calculated. We illustrated this property in \cite[Section 2, Ex.9]{st} for curves on Hirzebruch surfaces. As there are curves not defined on projective plane ${\mb P}^2_k$ but possibly defined on Hirzebruch surfaces, one can expect that constraining a curve in a specific ambient space and taking advantage of its geometry enables one to determine the $p$-rank and the $a$-number of the curve. In fact, the explicit computation of a basis for $H^1(X,\mathcal{O}_X) $ and the Frobenius map on $H^1(X,\mathcal{O}_X) $ will be useful to calculate $\sigma(X)$ and $a(X)$ for such curves $X$.\\
	
	In the calculations explained in the preceding paragraph we
	incorporate  the exact sequence 
	$$  0 \longrightarrow G \longrightarrow J_{X'} \longrightarrow J_X \longrightarrow 0 $$
	of group schemes arising from the resolution $X \to X'$.
	In the second section of the paper,  we discuss the  
	effect of the singularities of $X'$ on the structure of $G[p]$. Then once  
	we determine the structure of the subgroup $J_{X'}[p]$, we obtain the $p$-rank of the smooth curve $X$ and a lower bound for $a(X)$. More precisely, we have:
	
	\begin{proposition}{\label{p}} In the given setup, the following relations hold:
	\begin{itemize}
	    \item[1)] $\sigma(J_X) = \sigma(J_{X'}) - \sigma(G)$
	    \item[2)] $a(X) \ge a(J_{X'}) - a(G)$.
	\end{itemize}
	 \end{proposition}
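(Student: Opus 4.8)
The plan is to feed the exact sequence $0\to G\to J_{X'}\to J_X\to 0$ into two functors: the Lie algebra functor, together with its Frobenius action, for (1), and $\operatorname{Hom}(\alpha_p,-)$ (on commutative $k$-group schemes) for (2). The structural input I would use is that, for the normalization $\pi\colon X\to X'$, the group $G$ is smooth, connected and affine; consequently $J_{X'}$ is smooth and $J_{X'}\to J_X$ is a smooth surjection of relative dimension $\dim G$, so $\operatorname{Lie}$ is exact on the sequence, giving
\[
0\longrightarrow \operatorname{Lie}(G)\longrightarrow \operatorname{Lie}(J_{X'})\longrightarrow \operatorname{Lie}(J_X)\longrightarrow 0 .
\]
Under the canonical identifications $\operatorname{Lie}(J_{X'})=H^1(X',\mc O_{X'})$ and $\operatorname{Lie}(J_X)=H^1(X,\mc O_X)$ this is the sequence coming from $0\to\mc O_{X'}\to\pi_*\mc O_X\to\mc S\to 0$, where $\mc S=\pi_*\mc O_X/\mc O_{X'}$ is a skyscraper sheaf supported at the singular points, so that $\operatorname{Lie}(G)=H^0(X',\mc S)$ is exactly the local contribution of the singularities. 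All three terms carry the $p$-linear Frobenius $\varphi\colon f\mapsto f^p$, and the maps above are $\varphi$-equivariant.

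For (1), the point is that the $p$-rank of each of these groups is by definition the \emph{stable rank} of $\varphi$ on the corresponding space: for $J_X$ and $J_{X'}$ this is the Hasse--Witt description recalled in the introduction, and for $G$ it coincides with the toric rank of $G$, by the standard local computation at nodes ($\varphi$ acts invertibly on the corresponding line) and at cusps ($\varphi$ acts as $0$). Thus (1) reduces to the purely linear-algebraic statement that the stable rank $s(W):=\dim_k\bigcap_{n\ge0}\varphi^n(W)$ is additive in short exact sequences of finite-dimensional $k$-vector spaces carrying a compatible $p$-linear endomorphism. This is the one genuinely non-formal step, and I would prove it by showing that the stable image $W:=\bigcap_n\varphi^n(V)$ is $\varphi$-stable with $\varphi|_W$ bijective, that $W\cap V'=\bigcap_n\varphi^n(V')$, and that $W$ surjects onto $\bigcap_n\varphi^n(V'')$ --- the surjectivity using that each $\varphi^n(V)$ is in fact a $k$-subspace (since $k$ is perfect, every scalar admits a $p^n$-th root) and that a decreasing chain of nonempty affine subspaces has nonempty intersection. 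Additivity then gives $\sigma(J_{X'})=\sigma(G)+\sigma(J_X)$, which is the first relation.

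For (2), I would use instead that $\operatorname{Hom}(\alpha_p,-)$ is left exact. Applying it to $0\to G\to J_{X'}\to J_X$ produces an exact sequence $0\to\operatorname{Hom}(\alpha_p,G)\to\operatorname{Hom}(\alpha_p,J_{X'})\to\operatorname{Hom}(\alpha_p,J_X)$, whence
\[
a(J_{X'})=\dim_k\operatorname{Hom}(\alpha_p,J_{X'})\le a(G)+\dim_k\operatorname{Hom}(\alpha_p,J_X)=a(G)+a(X),
\]
which is exactly $a(X)\ge a(J_{X'})-a(G)$. Equivalently, one may bound $\dim_k\ker(\varphi)$ on the $\varphi$-equivariant Lie algebra sequence above, which is submultiplicative rather than additive. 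The reason only an inequality survives here, in contrast to (1), is that $\operatorname{Hom}(\alpha_p,-)$ is not right exact: an embedding $\alpha_p\hookrightarrow J_X$ need not lift to $J_{X'}$, with obstruction in $\operatorname{Ext}^1(\alpha_p,G)$.

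I expect the main obstacle to be the additivity of the stable Frobenius rank used in (1); everything else is formal once one has the smoothness of $G$ (hence of $J_{X'}\to J_X$, hence exactness of the Lie algebra sequence) and has matched the definitions of $\sigma$ and $a$ on the non-proper group $G$ with those on the Jacobians via the local structure of the singularities.
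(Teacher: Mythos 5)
Your part (2) is essentially the paper's own argument: apply $\operatorname{Hom}_{k-gr}(\alpha_p,-)$ to $0\to G\to J_{X'}\to J_X\to 0$ and use left exactness (the paper goes a bit further, using $\operatorname{Hom}(\alpha_p,\mathbb{G}_m)=\operatorname{Ext}^1(\alpha_p,\mathbb{G}_m)=0$ to replace $G$ by its unipotent part $G_u$, but the inequality itself needs only left exactness, as you say). For part (1) you take a genuinely different route. The paper stays at the level of group schemes: it applies $\operatorname{Hom}_{k-gr}(\mu_p,-)$, kills the obstruction term via Oort's vanishing $\operatorname{Ext}^1(\mu_p,\mathbb{G}_m)=\operatorname{Ext}^1(\mu_p,\mathbb{G}_a)=0$ together with induction along the filtration of $G_u$, compares orders, and gets $\sigma(G)=r$ from $\operatorname{Hom}(\mu_p,G_u)=0$. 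You instead pass to the $\varphi$-equivariant exact sequence of Lie algebras (equivalently the cohomology sequence of $0\to\mathcal{O}_{X'}\to\pi_*\mathcal{O}_X\to\mathcal{S}\to 0$) and prove additivity of the stable rank of a $p$-linear operator; your sketch of that lemma is sound (the kernel computation follows from the Fitting-type decomposition $V=V_s\oplus V_n$, and surjectivity onto the stable part of the quotient follows either from your decreasing chain of nonempty affine subspaces or from surjectivity of $\varphi-1$ on a connected vector group over $k=\bar{k}$), and it is precisely the semilinear shadow of the paper's $\operatorname{Ext}^1(\mu_p,G)=0$. The trade-off: the paper's proof is shorter and never needs a Hasse--Witt interpretation of $\sigma$ for the non-proper group $G$; yours additionally locates the defect $\sigma(G)$ explicitly inside $H^1(X',\mathcal{O}_{X'})$, namely as the stable rank of $\varphi$ on $H^0(\mathcal{S})$, which is in the spirit of the computations the paper carries out later.

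One step of yours is under-justified as stated: you identify $\sigma(G)$ (the toric rank $r$) with the stable rank of $\varphi$ on $\operatorname{Lie}(G)\cong H^0(\mathcal{S})$ by ``the standard local computation at nodes and cusps,'' but the proposition concerns an arbitrary integral proper curve $X'$, whose singularities need not be nodes or cusps (the paper's own examples involve ordinary triple points and a singularity of type $z^2=x^5$). The claim is true in general, but it needs a general argument: for instance, use $G\cong\mathbb{G}_m^r\times G_u$ over $k=\bar{k}$ together with the height-one correspondence $\operatorname{Hom}_{k-gr}(\mu_p,H)\cong\{v\in\operatorname{Lie}(H): v^{[p]}=v\}$, so that $\mu_p$-embeddings correspond to fixed vectors of the $p$-operation; then $\operatorname{Hom}(\mu_p,G_u)=0$ forces stable rank $0$ on $\operatorname{Lie}(G_u)$, while the stable rank on $\operatorname{Lie}(\mathbb{G}_m)^{\oplus r}$ is $r$. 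Alternatively, Serre's local description of $G$ (Groupes alg\'ebriques et corps de classes, Ch.~V) identifies the multiplicative part of $G$ with the semisimple part of $\varphi$ acting on the local contributions $H^0(\mathcal{S})$. You also tacitly use that the $p$-operation on $\operatorname{Lie}(J_{X'})$ is the Frobenius on $H^1(X',\mathcal{O}_{X'})$ and that $\sigma(J_{X'})$ equals its stable rank for the generalized Jacobian; the paper assumes exactly this (its definition of $\sigma$ via Frobenius together with the facts quoted from Yui), so you are on the same footing there. With that one identification repaired, your argument for (1) is a correct alternative to the paper's.
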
 
  \par In third section, we provide a family of examples (Ex. \ref{ge}) of complete intersection curves to find their $a$-number and lower bound on their $p$-rank by using explicit basis constructed in \cite[Theorem 2]{st} and by using explicit action of the Frobenius map computed in \cite[Proposition 14]{st}. This family consists of generalized Fermat curves $F_{m,n}$ of type $(m,n)$ and forms a family of algebraic curves of dimension $n-2$ in the moduli space of projective smooth genus $ g(F_{m,n}) = 1 + \frac{m^{n-1}}{2}((m-1)(n-1)-2)$ curves (\cite{Hi}, Section 2). The investigation of algebraic curves over fields of characteristic $p>0$ is related to several problems for curves over finite fields, such as the cardinality of the set of rational points, the search for maximal curves with respect to the Hasse-Weil
bound, properties on zeta functions and Weierstrass points on curves. Many results
have been obtained for classical Fermat curves (i.e., $n=2$) \cite{GV, VZ, N}.
 \par The algebraic model of generalized Fermat curve $F_{m,n}$ is as follows:
 $$ C^m(\lambda_0,\lambda_1,...,\lambda_{n-2}) := \begin{Bmatrix}  \lambda_0 x_0^m+x^m_1+x^m_2=0\\ \lambda_1 x_0^m+x^m_1+x^m_3=0 \\ \vdots \\ \lambda_{n-2}x_0^m+x^m_1+x^m_n=0\end{Bmatrix} \subset {\mb P}^n $$ where the constants $ \lambda_0,\lambda_1,...,\lambda_{n-2}$ are pairwise different and $\lambda_i \neq 0,1$. We set $X:= C^m(\lambda_0,\lambda_1,...,\lambda_{n-2})$ and define sets 
\[
   S(r,s) = \left\{  (a_0,a_1,\ldots,a_n) \in \mathbb{N}^{n+1} \ \middle\vert \begin{array}{l}
 \displaystyle\sum_{j=0}^{n} a_j = (n-1)m, \   0<a_i \leq m \text{ for } i=2,\cdots ,n, 
  \\
  rm< a_0 \leq (r+1)m \text{ and }  sm< a_1 \leq (s+1)m 
  \end{array}  \right\}.
\] for $0\leq r+s \leq n-2$	with $r,s\geq 0$. By using the sets $S(r,s)$ and the description of a basis for $H^1(X,\mathcal{O}_X) $ in \cite[Thm.1]{st}, we find an explicit basis $\mathcal{B} = \displaystyle\bigcup_{r+s=0}^{n-2} \mathcal{B}(r,s)$ for $H^1(X,\mathcal{O}_X) $.Therefore, we state the following theorem:
\begin{theorem} \label{basis}
   If we assume above setup, then we have that the union $$ \mathcal{B} = \displaystyle\bigcup_{r+s=0}^{n-2}\mathcal{B}(r,s)$$ is a basis for the cohomology group  $H^1(X,\mathcal{O}_X).$
\end{theorem}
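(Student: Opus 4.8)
The plan is to deduce Theorem~\ref{basis} from the monomial description of $H^1(X,\mathcal{O}_X)$ for complete intersection curves established in \cite[Thm.~1]{st}, by specialising it to the equations cutting out $C^m(\lambda_0,\ldots,\lambda_{n-2})$. Write $R=k[x_0,\ldots,x_n]$ and $I=(f_0,\ldots,f_{n-2})$ with $f_i=\lambda_i x_0^m+x_1^m+x_{i+2}^m$. Recall that for a smooth complete intersection curve cut out by $n-1$ forms of degree $m$ in $\mathbb{P}^n$ the canonical bundle is $\mathcal{O}_X(e)$ with $e=(n-1)m-n-1$, and that $X$ is projectively normal, so $\dim_k H^1(X,\mathcal{O}_X)=\dim_k H^0(X,\omega_X)=\dim_k(R/I)_e$; in the \v Cech picture used in \cite[Thm.~1]{st}, $H^1(X,\mathcal{O}_X)$ is spanned by the classes $[x^a]$ of monomials $x^a=x_0^{a_0}\cdots x_n^{a_n}$ with $\sum_j a_j=(n-1)m$ and $a_j\ge 1$ for all $j$ (so that $x^a/(x_0\cdots x_n)$ has degree $e$), subject to the relations induced by the $f_i$, with $[g\,f_i]=0$ whenever $g\,f_i$ lies in this spanning set. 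By construction $\mathcal{B}(r,s)$ consists of the classes $[x^a]$ with exponent vector in $S(r,s)$, so $\mathcal{B}=\bigcup_{r+s=0}^{n-2}\mathcal{B}(r,s)$ is precisely the set of classes $[x^a]$ with $\sum_j a_j=(n-1)m$, $a_0,a_1\ge 1$ and $1\le a_j\le m$ for $j=2,\ldots,n$; the theorem therefore asserts that adjoining the bounds $a_j\le m$ $(j\ge 2)$ extracts an honest basis from the spanning set of \cite[Thm.~1]{st}.

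First I would show that $\mathcal{B}$ spans. If $x^a$ has some $a_{i+2}\ge m+1$, put $h=x^a/x_{i+2}^m$ (a monomial with all exponents $\ge 1$); then $x^a=h\,f_i-\lambda_i h\,x_0^m-h\,x_1^m$, hence $[x^a]=-\lambda_i[h\,x_0^m]-[h\,x_1^m]$, where $h\,x_0^m$ and $h\,x_1^m$ again have total degree $(n-1)m$, all exponents $\ge 1$, and strictly smaller $\sum_{j\ge 2}a_j$. Iterating terminates and writes every class as a $k$-combination of classes with $1\le a_j\le m$ for $j\ge 2$. For such a monomial $a_0+a_1=(n-1)m-\sum_{j\ge 2}a_j\le (n-1)m-(n-1)$, so $a_0$ and $a_1$ fall in intervals indexed by integers $r,s\ge 0$ with $(r+s)m<(n-1)m$, i.e.\ $r+s\le n-2$; thus the surviving exponent vectors are exactly those in $\bigcup_{r+s\le n-2}S(r,s)$, and $\mathcal{B}$ spans.

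Next I would prove linear independence by identifying the images of the associated degree-$e$ monomials $x^b$, with $b_j=a_j-1$, with a $k$-basis of $(R/I)_e$. With the lexicographic order $x_n>\cdots>x_0$ the leading term of $f_i$ is $x_{i+2}^m$, and since $x_2^m,\ldots,x_n^m$ are pairwise coprime, $\{f_0,\ldots,f_{n-2}\}$ is a Gr\"obner basis of $I$, so the standard monomials --- those divisible by no $x_j^m$ with $j\ge 2$ --- form a $k$-basis of $R/I$. In degree $e$ these are exactly the $x^b$ with $0\le b_j\le m-1$ for $j\ge 2$, which under $b_j\mapsto b_j+1$ correspond bijectively to $\bigcup_{r+s\le n-2}S(r,s)$. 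Hence the classes in $\mathcal{B}$ are linearly independent, and being a spanning set, $\mathcal{B}$ is a basis. As a consistency check, $\#\bigcup S(r,s)=\dim_k(R/I)_e=g(F_{m,n})$, which agrees with the adjunction formula $g=1+\tfrac12 m^{n-1}\big((n-1)m-n-1\big)=1+\tfrac{m^{n-1}}{2}\big((m-1)(n-1)-2\big)$.

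I expect the main obstacle to be the reduction step of the second paragraph: one must check that the rewriting rule $x_{i+2}^m\mapsto-\lambda_i x_0^m-x_1^m$, applied inside the \v Cech complex, is compatible with the relations already built into \cite[Thm.~1]{st} (so that no further identification collapses the proposed basis), and that the bookkeeping of the resulting intervals for $a_0$ and $a_1$ reproduces exactly the constraints defining the $S(r,s)$, in particular the bound $r+s\le n-2$. Matching this against the precise formulation of \cite[Thm.~1]{st}, together with the dimension count, completes the argument.
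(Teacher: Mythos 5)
There is a genuine gap, and it comes from a misidentification of what the set $\mathcal{B}$ actually is. In the paper's setup, $H^1(X,\mathcal{O}_X)$ is realized as a \emph{subspace} of $H^n(\mathbb{P}^n,\mathcal{O}_{\mathbb{P}^n}(-(n-1)m))$, namely $\{\alpha : \alpha f_i=0 \text{ for all } i\}$, not as a quotient of the monomial span by relations $[hf_i]=0$. In that kernel model a bare monomial $x_0^{-a_0}\cdots x_n^{-a_n}$ with $a_0>m$ or $a_1>m$ is \emph{not} an element of $H^1(X,\mathcal{O}_X)$ (the paper checks explicitly that $x^{-a}f_t\neq 0$ for such $a$), and accordingly $\mathcal{B}(r,s)$ for $r+s\geq 1$ does not consist of "classes $[x^a]$": its elements are the corrected sums $\alpha_r^s(a)=x^{-a}+\sum_{l+q\geq 1}(-1)^{l+q}x_0^{-a_0+lm}x_1^{-a_1+qm}\varphi_l^q$ with inductively determined coefficients $\beta_l^q$. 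The space you work with — monomials of degree $(n-1)m$ with all exponents $\geq 1$ modulo $[hf_i]=0$ — is, after the shift $b_j=a_j-1$, the quotient $(R/I)_e\cong H^0(X,\omega_X)$, i.e.\ the Serre dual of $H^1(X,\mathcal{O}_X)$, not the space in which $\mathcal{B}$ lives. Consequently your argument never addresses the heart of the paper's proof: the verification that $f_{t-2}\,\alpha_r^s(a)=0$ for every $t$, which is exactly what forces the inductive recursion $\beta_r^s(i_1',\dots,t,\dots)=\lambda_{t-2}\beta_{r-1}^s+\beta_r^{s-1}$ and the cancellation bookkeeping. This step cannot be bypassed, both because the theorem is a statement about these specific elements and because the subsequent results (Theorems \ref{anum} and \ref{prank}) apply the Frobenius formula $F^*\alpha=(\prod_t f_{t-2})\alpha^2$ to the explicit $\alpha_r^s$.

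What your proposal does establish is correct and even attractive for the counting part: the observation that $\{f_0,\dots,f_{n-2}\}$ is a Gr\"obner basis (pairwise coprime leading terms $x_{i+2}^m$ under lex with $x_n>\cdots>x_0$), so the standard monomials in degree $e=(n-1)m-n-1$ biject with $\bigcup_{r+s\leq n-2}S(r,s)$, gives $\dim_k H^1(X,\mathcal{O}_X)=\operatorname{Card}(\mathcal{B})$ more directly than the paper's route through Proposition \ref{binom}, Proposition \ref{dim} and Corollary \ref{dim2}, and your reduction argument is a valid proof that the monomial classes form a basis of $(R/I)_e$. So your proposal could serve as a replacement for part (b) of the paper's proof (the cardinality count), but part (a) — membership $\mathcal{B}\subset H^1(X,\mathcal{O}_X)$, i.e.\ the construction and verification of the correction terms — is missing, and with it the theorem as stated is not proved.
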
 We then restrict ourselves $p=2$ and apply Frobenius operator $F^*$ (computed as in \cite[Thm.2]{st}) on the basis specified in Thm.\ref{basis}  to compute $a$-number of $X$ by putting restrictions on $\lambda_0,\lambda_1, \cdots, \lambda_{n-2}$ and find lower bound for $p$-rank of $X$ in Thm.\ref{anum} and Thm.\ref{prank}, respectively.
\par The last section is reserved to observe a relation between $a$-numbers and $p$-ranks of a pair of curves $\left (  X',X \right)$ where $X'$ is the singular generalized Fermat curve $ C^m(1,1,\lambda_2,...,\lambda_{n-2})$ and $X$  is the smooth model of it. In Ex.\ref{ges}, we derive the following relations: $$ \left\{ 
    \begin{array}{lr}
        a(X)=a(X')\\
      \sigma(X)= \sigma(X') - (n-2)^m(m-1)
    \end{array}
\right\}  .$$

	The notation is as follows. \\
	
	 $k$ is algebraically closed field of characteristic $p > 0$. 
	
    $X$ is a projective smooth curve over $k$, of genus $g \ge 2$. 

     $X'$ is a projective singular curve over $k$, of genus $g \ge 2$.
 
	$J_X$ is the Jacobian of $X$. 

        $J_X'$ is the Jacobian of $X'$.

	$\omega_X$ is the canonical bundle of $X$. 
	
	$\mu_p$, $\alpha_p$ denote the usual infinitesimal group schemes. 
	
	If $H$ is a $k$-group scheme, $H[n]$ is the kernel of multiplication by $n$ in $H$. 
	
	$a(H) = \mbox{dim}_k(\operatorname{Hom}_{k-gr}(\alpha_p, H))$.
	
	{$\sigma(H)$ is defined by $p^{\sigma(H)}  = |\operatorname{Hom}_{k-gr}(\mu_p,~H)|$.

		\section{Singular curves}
		
		We let $X'$ be a (singular) integral proper curve over $k$ and $\pi : X \to X'$ be the normalization map. 
		The Jacobian $J_{X'}$ sits in the following exact sequence of $k$-group schemes 
		\begin{equation} \label{eq}
		   0 \longrightarrow G \longrightarrow J_{X'} \longrightarrow J_X \longrightarrow 0,
		\end{equation}
	 where $G$ is an affine connected group.\\
		
		We let $L$ be a line bundle on $X'$ and recall the following basic facts :
		\begin{itemize}
		  \item Duality : Cup product composed with the residue map gives a perfect pairing
			$$H^1(X', L) \times H^0(X', \Omega_{X'} \otimes L^{-1}) \to H^1(X', \Omega_{X'}) \cong  k$$
			\cite[Chap. IV, no. 10, Last Remark]{S1}. 
			\item We have the Riemann-Roch Theorem
			$$\chi(L) = \operatorname{deg}(L) + 1 - p_a(X').$$
	Here	$p_a(X') = 1 - \chi(X')
			= \operatorname{dim}(H^1(X', \mc{O}_{X'}))= ~\operatorname{dim}(H^0(X', \Omega_{X'}))$
			is the arithmetic genus of $X'$ \cite[Chap. IV, no. 6, Thm. 1]{S1}.
                \item By applying duality in the special case $L = \mc{O}_{X'}$ we obtain  
                an isomorphism  $$H^0(X', \Omega_{X'}) \cong H^0(J_{X'}, \Omega_{J_{X'}})$$.
			
		\end{itemize}
		Next, we recall the concept of ordinarity for curves and the definitions of the $p$-rank and the $a$-number.
 \begin{definition} \label{od} \cite[Thm. 2.2]{Su}
     Set $W=H^1(X',\mathcal{O}_X')$. Let $W^{s}$ be the largest $F$ invariant subspace of $W$ and $W^*$ be the subspace of $W$ on which $F$ acts as zero map. The natural numbers $\sigma(X') = \operatorname{dim}_k(W^s)$ and $a(X')= \operatorname{dim}_k(W^*)$ are called the $p$-rank and the $a$-number of $X'$, respectively. We say that $X'$ is an ordinary curve if $ W^s = W$  
		\end{definition}
  
		 The following facts are in  \cite[Thm. 7.1]{Y}:
		 	\begin{itemize}
		
		\item[$\mathbf{a)}$] The $p$-rank of $X'$ coincides with the $p$-rank
		$\sigma(J_{X'})$ of its Jacobian i.e., $\sigma(X')=\sigma(J_{X'})$. 
		\item[$\mathbf{b)}$] The $a$-number of $X'$  coincides with the $a$-number
		$a(J_{X'})$ of its Jacobian i.e., 	$a(X')=a(J_{X'})$. 

	\end{itemize}

 \vskip 0.5cm
 
	  Let $X$ be a non-singular proper integral curve defined over algebraically closed field $k$ of characteristic $p>0$. The $\it{Cartier ~operator}$ $\mathscr{C}$ defined in \cite[Chapter 2, Section 6]{Ca} is a $1/p-$linear operator acting on the sheaf $\Omega_{X/k}$ of differential forms for $X$ which satisfies the following properties:
		   \begin{itemize}
		       	\item[1)] $\mathscr{C}(\omega_1 + \omega_2) =\mathscr{C}(\omega_1) + \mathscr{C}(\omega_2) $
		       		\item[2)] $\mathscr{C}(df) = 0 $
		       	\item[3)] $\mathscr{C}(f^p\omega) = f\mathscr{C}(\omega) $
		       		\item[4)] $\mathscr{C}(f^{p-1}df) = df$
		       			\item[5)] $\mathscr{C}(df/f) = df/f $
		       	
		   \end{itemize}
		   for all local sections $\omega_1$, $\omega_2$ and  $\omega$ (respectively $f$) of $\Omega_X$ (respectively of ${\mc O}_{X}$).\\ In particular, the operator $\mathscr{C}$ acts on $H^0(X, \Omega_{X})$, regular differential forms on $X$.
		\begin{remark}
		     The action of $\mathscr{C}$ can be extended to rational differential forms on $X$.  The following lemma shows that in a normalization set up $\pi : X \to X'$ restricting to the subspace $\pi^*(H^0(X', \Omega_{X'}))$ of rational forms on $X$ we obtain a $1/p-$linear action on the cohomology group $H^0(X', \Omega_{X'})$. 
		\end{remark}
		
		\begin{lemma}\label{cartier}
			Let $\mathscr{C}$ be the Cartier operator acting on the \textit{rational forms}
			on $X$ and \\ $F:H^1(X', {\mc O}_{X'}) \to  H^1(X', {\mc O}_{X'})$ be the Frobenius map. We have 
			\begin{itemize}
			\item[a)] ${\mathscr{C}}(H^0(X', \Omega_{X'})) \subset H^0(X', \Omega_{X'})$.
			\item[b)] The duality pairing
$$\langle~, ~\rangle : H^1(X', {\mc O}_{X'}) \times H^0(X', \Omega_{X'}) \to H^1(X', \Omega_{X'}) \cong k$$
satisfies
$$\langle Fv,\omega \rangle ~= ~\langle v, \mathscr{C}\omega \rangle ^{p}.$$
\end{itemize}
		\end{lemma}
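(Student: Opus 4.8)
The plan is to work locally at each point of $X$ and use the defining properties (1)--(5) of the Cartier operator together with the local description of the canonical sheaf $\Omega_{X'}$ of the singular curve. For part (a), recall that a rational differential $\omega$ on $X$ lies in $H^0(X',\Omega_{X'})$ (viewed inside the rational forms on $X$ via $\pi^*$) precisely when, for every point $x'\in X'$, the local symbol $\sum_{\pi(x)=x'}\operatorname{res}_x(f\omega)$ vanishes for all $f$ in the local ring $\mathcal{O}_{X',x'}$; equivalently $\omega$ is a section of the dualizing sheaf $\omega_{X'}$ (Rosenlicht's description of regular differentials on a singular curve). First I would establish that $\mathscr{C}$ commutes with $\pi^*$, so that it suffices to check stability of the Rosenlicht/residue condition under $\mathscr{C}$. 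Then the key point is the compatibility of $\mathscr{C}$ with residues: for a rational form $\eta$ on the smooth curve $X$ one has $\operatorname{res}_x(\eta) = \operatorname{res}_x(\mathscr{C}\eta)^p$ (this follows from properties (2), (4), (5) by expanding $\eta$ in a local uniformizer, since $\mathscr{C}$ kills exact forms and sends $t^{p-1}\,dt/t = dt/t$ appropriately while annihilating all other powers that are not $\equiv -1 \pmod p$). Using the $p$-linearity $\mathscr{C}(f^p\eta)=f\mathscr{C}(\eta)$ together with this residue formula, one checks that if $\sum_x \operatorname{res}_x(f\omega)=0$ for all $f\in\mathcal{O}_{X',x'}$ then the same holds for $\mathscr{C}\omega$; hence $\mathscr{C}\omega \in H^0(X',\Omega_{X'})$.

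For part (b), the pairing $\langle v,\omega\rangle$ on the singular curve is the cup product $H^1(X',\mathcal{O}_{X'})\times H^0(X',\Omega_{X'})\to H^1(X',\Omega_{X'})\cong k$ followed by the trace/residue isomorphism, as recalled in the duality bullet above. Represent $v\in H^1(X',\mathcal{O}_{X'})$ by a \v{C}ech cocycle $(f_{ij})$ with respect to an affine cover; then $F v$ is represented by $(f_{ij}^p)$. The cup product $Fv \cup \omega$ is computed, after a partition-of-unity / Dolbeault-type maneuver, as a sum of local residues of the shape $\operatorname{res}(f_{ij}^p\,\omega)$, and by the $p$-linearity property (3), $f_{ij}^p\,\omega$ has the same Cartier image as... more precisely $\mathscr{C}(f_{ij}^p\,\omega)=f_{ij}\,\mathscr{C}(\omega)$. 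Combining this with the residue-raising-to-$p$-th-power identity from part (a) gives
$$\operatorname{res}(f_{ij}^p\,\omega) = \operatorname{res}\bigl(\mathscr{C}(f_{ij}^p\,\omega)\bigr)^p = \operatorname{res}\bigl(f_{ij}\,\mathscr{C}\omega\bigr)^p,$$
and summing over the cocycle yields $\langle Fv,\omega\rangle = \langle v,\mathscr{C}\omega\rangle^p$, which is the desired formula. The smooth-curve version of this identity is classical (it is exactly the statement that the Hasse--Witt matrix is the $p$-power transpose of the Cartier--Manin matrix); what is needed here is to carry it through the normalization, which is legitimate because both $H^1(X',\mathcal{O}_{X'})$ and $H^0(X',\Omega_{X'})$ embed compatibly into their counterparts on $X$ (or rather into rational \v{C}ech data), and all the local computations take place at points of $X$.

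I expect the main obstacle to be the careful bookkeeping at the singular points: one must ensure that the \v{C}ech representatives and the local residue computations are compatible with the identification of $H^0(X',\Omega_{X'})$ with a subspace of rational forms on $X$, i.e.\ that the ``extra'' conditions cutting out the dualizing sheaf inside the full space of rational forms on $X$ interact correctly with $\mathscr{C}$. Once the commutation of $\mathscr{C}$ with $\pi^*$ and the residue identity $\operatorname{res}_x(\eta)=\operatorname{res}_x(\mathscr{C}\eta)^p$ on the smooth curve are in hand, both parts follow by essentially formal manipulation; the content is entirely in the local analysis near the singularities, where the Rosenlicht description of $\omega_{X'}$ and the explicit behavior of $\mathscr{C}$ on Laurent expansions must be reconciled.
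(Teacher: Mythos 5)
Your proposal is correct and follows essentially the same route as the paper: both use Rosenlicht's residue characterization of $H^0(X',\Omega_{X'})$ as rational forms on $X$, the compatibility $\operatorname{Res}(f\mathscr{C}\omega,x)=\operatorname{Res}(f^p\omega,x)^{1/p}$ (which you derive from $\mathscr{C}(f^p\omega)=f\mathscr{C}\omega$ and the residue identity, while the paper checks it by Laurent expansion), and the residue description of the duality pairing to get $\langle Fv,\omega\rangle=\langle v,\mathscr{C}\omega\rangle^p$. The only cosmetic difference is that you phrase the class $v$ via \v{C}ech cocycles where the paper works with local expansions in the style of Serre's repartitions; the substance is identical.
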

		\begin{proof}
		
		     a) Let $\pi: X\longrightarrow X'$ be the normalization morphism and $t$ be a local parameter at $x\in X$.  $H^0(X', \Omega_{X'})$  consists precisely  of rational differential forms $\omega$ on $X$ which at each
		point $x' \in X'$ satisfy the condition 
		$\displaystyle{\sum_{x_j \mapsto x'}}\mbox{Res}(f\omega, x_j) = 0$ for all $f \in {\mc O}_{X', x'}$ where the sum is taken over all the points $x_j\in X$ such that $\pi(x_j) = x'$.  \\
		Let $\omega = \displaystyle \sum_{n=-m}^{\infty}a_nt^ndt$. Since the Cartier operator $\mathscr{C}$ satisfies the properties 2) - 4), we get  \[ \mathscr{C}(t^ndt) =  \begin{cases} 
      \mathscr{C}(\frac{1}{n+1}dt^{n+1}) = 0 & \text{ if } p\not | n+1 \\
      t^{\frac{n+1}{p}-1}dt & \text{ if } p | n+1
      \end{cases}. 
\]
	Therefore, we conclude, by using semilinearity of the Cartier operator $\mathscr{C}$, that $$\mathscr{C}(\omega) = \displaystyle \sum_{n=-l}^{\infty}a^{1/p}_{pn-1}t^{n-1}dt$$ where $m$ is a non-negative integer and $l$ is the greatest non-negative integer so that $pl + 1 \leq m$.	
	 As $\mbox{Res}(f{\mathscr{C}}(\omega), x) = \mbox{Res}(f^p\omega, x)^{1/p}$ for $x \in X$, we see that for $\omega \in H^0(X', \Omega_{X'})$
	\[ \left(\sum_{x_j \to x'}\mbox{Res}(f{\mathscr{C}}(\omega), x_j)\right)^p =  \sum_{x_j \to x'}\mbox{Res}(f^p\omega, x_j) = 0. \]
		The last equality is due to definition of $\omega$. Hence ${\mathscr{C}}(H^0(X', \Omega_{X'})) \subset H^0(X', \Omega_{X'})$ and the result follows. \\
		
		b) Let $t$ be a local parameter at $x'\in X'$. Let $f$ and $\omega $ be any elements of $H^1(X', {\mc O}_{X'})$ and $ H^0(X', \Omega_{X'})$, respectively. Then $f= \displaystyle \sum_{i=-m}^{\infty}a_{i}t^i$ and $ \omega = \displaystyle\sum_{j=-n}^{\infty}b_{j}t^jdt $ at $x'$ for some non-negative integers $m$ and $n$. For duality between  $H^1(X', {\mc O}_{X'})$ and $ H^0(X', \Omega_{X'})$, we refer \cite[Chap. IV no. 9 and 10]{S1}.  \\
		We need only show that $$\mbox{Res}(f^p\omega, x') = (\mbox{Res}(f{\mathscr{C}}(\omega), x'))^p$$ so that \[ \langle F(f), \omega\rangle = \sum_{x' \in X' }\mbox{Res}(f^p\omega, x') =\left (\sum_{x' \in  X'}\mbox{Res}(f{\mathscr{C}}(\omega), x_j)\right )^p =\langle f, \mathscr{C}(\omega)\rangle^p. \]  We find that  
		$$\mbox{Res}(f^p\omega, x') = \displaystyle\sum_{pi+j=-1}a^p_ib_{j}   $$ and $$\mbox{Res}(f{\mathscr{C}}(\omega), x') = \displaystyle\sum_{i+j=0}a_ib^{1/p}_{pj-1}.   $$ Note that 
		$$\displaystyle\sum_{pi+j=-1}a^p_ib_{j} = \displaystyle\sum_{i}a^p_ib_{-pi-1} = \displaystyle\sum_{i+j=0}a^p_ib_{pj-1} = \displaystyle(\sum_{i+j=0}a_ib^{1/p}_{pj-1})^p.  $$
		Hence we obtain the desired duality.
		\end{proof}

\vskip 0.3cm

In the following Proposition we will use the fact that since 
$k$ is algebraically closed, in the exact sequence (1) the group $G$ decomposes into a product $G = \mathbb{G}_m^r \times G_u$ where $r$ is a non-negative integer and $G_u$ is a unipotent group (that is, a successive extension of the additive group $\mathbb{G}_a$).
	
		\begin{p1} \label{GJ1} In the given setup, the following relations hold:
	\begin{itemize}
	    \item[1)] $\sigma(J_X) = \sigma(J_{X'}) - \sigma(G)
	    = \sigma(J_{X'}) - r$
	    \item[2)] $a(J_X) \ge a(J_{X'}) - a(G) = a(J_{X'}) - a(G_u)$.
	\end{itemize}
		\end{p1}
		\begin{proof}
		
	\begin{itemize}
	\item[1)] 	We apply $\mbox{Hom}_{k-gr}(\mu_p, . )$  to sequence (1) and we get the exact sequence
		$$0 \to \mbox{Hom}_{k-gr}(\mu_p, G) \to\mbox{Hom}_{k-gr}(\mu_p, J_{X'}) \to \mbox{Hom}_{k-gr}(\mu_p, J_X) \to\mbox{Ext}^1(\mu_p, G).$$
		We know that the groups
		$\mbox{Ext}^1(\mu_p, \mathbb{G}_m) ~\mbox{and} ~\mbox{Ext}^1(\mu_p, \mathbb{G}_a)$ are trivial (\cite{O}, p. 81). Therefore, 
		$$\mbox{Ext}^1(\mu_p, G) = \mbox{Ext}^1(\mu_p, \mathbb{G}_m)^r \times \mbox{Ext}^1(\mu_p, G_u) = \mbox{Ext}^1(\mu_p, G_u).$$
		By induction on the chain of successive extensions defining $G_u$, we see that $$\mbox{Ext}^1(\mu_p, G_u)$$
		is trivial. \\
		We compare the orders of the groups in sequence (1) to get $$p^{\sigma(J_{X'})} = p^{\sigma(J_{X})}p^{\sigma(G)}.$$
		Thus we have $\sigma(J_X) = \sigma(J_{X'}) - \sigma(G)$.\\ 
		
		Since $\mbox{Hom}_{k-gr}(\mu_p, \mathbb{G}_a)$ is also trivial,
		again by induction on the chain of successive extensions we find $\mbox{Hom}_{k-gr}(\mu_p, G_u) = 0$. Thus we get
		$$p^{\sigma(G)} = |\mbox{Hom}_{k-gr}(\mu_p, G)| =
		 |\mbox{Hom}_{k-gr}(\mu_p, \mathbb{G}_m)^r|= p^r.$$ 
		Hence ${\sigma(G)} = r$ and  we obtain the the first statement of the Proposition.\\ 
		
	\item[2)] We apply $\mbox{Hom}_{k-gr}(\alpha_p, . )$ to sequence (1) to get the exact sequence 	$$0 \to \mbox{Hom}_{k-gr}(\alpha_p, G) \to \mbox{Hom}_{k-gr}(\alpha_p, J_{X'}) \to\mbox{Hom}_{k-gr}(\alpha_p, J_X) \to\mbox{Ext}^1(\alpha_p, G).$$
	Since $\mbox{Hom}_{k-gr}(\alpha_p, \mathbb{G}_m) ~\mbox{and}  ~ \mbox{Ext}^1(\alpha_p, \mathbb{G}_m)$ are trivial (loc. cit.),
	the exact sequence reduces to $$0 \to \mbox{Hom}_{k-gr}(\alpha_p, G_u) \to \mbox{Hom}_{k-gr}(\alpha_p, J_{X'}) \to \mbox{Hom}_{k-gr}(\alpha_p, J_X) \to \mbox{Ext}^1(\alpha_p, G_u)$$
	and we obtain the relation $a(X) \ge a(J_{X'}) - a(G_u)$.
	
		\end{itemize}
		\end{proof}
	\begin{corollary}\label{cor}  If all singular points of $X'$ are double points of the form $z^2 = x^r, r \ge 3 ~\mbox{odd}$ i.e., its singularities are analytically isomotphic to the singularity at $(0,0)$ of the curve $z^2 = x^r$, then we have $\sigma(J_X) = \sigma(J_{X'})$.
		\end{corollary}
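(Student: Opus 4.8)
The plan is to show that, under the stated hypothesis, the affine group $G$ in the exact sequence \eqref{eq} is unipotent; writing $G = \mathbb{G}_m^r \times G_u$ as in the paragraph preceding Proposition 1, this means $r = 0$, hence $\sigma(G) = r = 0$, and part 1) of Proposition 1 yields $\sigma(J_X) = \sigma(J_{X'}) - \sigma(G) = \sigma(J_{X'})$. To see that $G$ is unipotent I would first recall its local structure. Since $\pi : X \to X'$ is the normalization and $X'$ is proper and integral, the sheaf sequence $0 \to \mathcal{O}_{X'}^{\times} \to \pi_{*}\mathcal{O}_{X}^{\times} \to \pi_{*}\mathcal{O}_{X}^{\times}/\mathcal{O}_{X'}^{\times} \to 0$, together with $H^0(X',\mathcal{O}_{X'}^{\times}) = H^0(X,\mathcal{O}_X^{\times}) = k^{\times}$, identifies $G$ with a product $\prod_{x' \in \mathrm{Sing}(X')} G_{x'}$ of local contributions, where $G_{x'} = \widetilde{\mathcal{O}}_{X',x'}^{\times}/\mathcal{O}_{X',x'}^{\times}$ is the algebraic group built from the integral closure $\widetilde{\mathcal{O}}_{X',x'}$ of $\mathcal{O}_{X',x'}$ in the function field of $X$ (see \cite{S1}).

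Next I would compute the toric rank of each $G_{x'}$. Passing to completions, $\widetilde{\mathcal{O}}_{X',x'}\otimes_{\mathcal{O}_{X',x'}}\widehat{\mathcal{O}}_{X',x'} \cong \prod_{i=1}^{b(x')} k[[t_i]]$, where $b(x')$ is the number of branches of $X'$ at $x'$ (equivalently, the number of points of $X$ over $x'$); reading off leading coefficients shows that $G_{x'}$ has toric part $\mathbb{G}_m^{b(x')}/\mathbb{G}_m = \mathbb{G}_m^{b(x')-1}$ and unipotent complement coming from the groups of principal units, the diagonal $\mathbb{G}_m$ being the image of the constants in $\widehat{\mathcal{O}}_{X',x'}^{\times}$. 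Hence $r = \sum_{x'}\big(b(x') - 1\big)$, and it suffices to check that each singular point of $X'$ is unibranch. For the double point $z^2 = x^r$ with $r \ge 3$ odd this is immediate: since $\gcd(2,r) = 1$, the parametrization $x = t^2$, $z = t^r$ realizes the completed local ring as $k[[t^2,t^r]] \subset k[[t]]$, and $k[[t]]$ is a discrete valuation ring finite over $k[[t^2,t^r]]$, hence equal to its integral closure; being local it has a single maximal ideal, so $b(x') = 1$. (For even $r$ the polynomial $z^2 - x^r$ factors and the singularity acquires two branches — precisely the source of a $\mathbb{G}_m$ factor in $G$ — which is why oddness is assumed.) Therefore every $G_{x'}$ is unipotent, $r = 0$, and the corollary follows.

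The one step that genuinely needs care, and that I would document with an explicit reference, is the identification of the toric rank of the local group $G_{x'}$ with $b(x') - 1$ in arbitrary characteristic; once that classical fact is in place, the remaining input is just the elementary observation that $z^2 = x^r$ with $r$ odd is unibranch, together with the $p$-rank formula of Proposition 1.
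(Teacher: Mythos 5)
Your argument is correct and takes essentially the same route as the paper: both proofs reduce the corollary to showing that the affine group $G$ in the exact sequence \eqref{eq} has trivial multiplicative part, so that $\sigma(G)=0$ and part 1) of Proposition 1 gives $\sigma(J_X)=\sigma(J_{X'})$ --- the paper simply cites \cite[Chapter V, Section 17]{S1} for the local structure ($R_m=\mathbb{G}_{m,P}\times G_u$, $G=R_m/\Delta=G_u$), whereas you re-derive that input from the unit-sheaf sequence, the toric-rank formula $r=\sum_{x'}(b(x')-1)$, and the elementary check that $z^2=x^r$ with $r$ odd is unibranch (i.e.\ $k[[t^2,t^r]]$ has normalization the local ring $k[[t]]$). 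The only blemish is the parenthetical remark that for even $r$ the singularity ``acquires two branches'': in characteristic $2$ the polynomial $z^2-x^r$ is then a square and the scheme is non-reduced rather than two-branched, but this aside plays no role in your proof.
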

		
		Proof. Under the given hypothesis and with the notation of \cite[Chapter 5, Section 17]{S1} , $R_m= \mathbb{G}_{m,P} \times G_u$. Therefore, $G = R_{m} /\Delta = G_u$. Hence $\sigma(G) = 0$. $\Box$
		
		\vskip 0.1cm
		
			\begin{example}  \label{ex2}
			We will give an example of a singular curve $X'$ such that
		\begin{enumerate}
		\item	$\sigma(J_X) < \sigma(J_{X'})
				~\mbox{and} ~a(J_{X'}) = a(G)$ 
	    \item $X$ is an ordinary curve of genus $g(X) = p_a(X') - ~\mbox{dim}(G)$.
		\end{enumerate}
			
			Let $p = 2$ and consider the plane curve $X'$ \cite[Section 4, Thm. 3]{K} defined by the equation
			$$f(x,y,z)=x^3y^3+ x^3z^3 + y^3z^3 + \lambda z^6 = 0,~\mbox{where} ~\lambda \neq \lambda^2.$$
			The singular curve $X^{'}$ is of arithmetic genus $p_a(X') = 10$ and has precisely two singular points $[1:0:0]$ and $[0:1:0]$
			which are both ordinary threefolds (cf. \cite{K}, Section 4 and see more generally \cite[Chapter I, Section 5, pp. 33-39]{H} for ordinary $r$-fold).
			Thus, we see that the group $G$ in the exact sequence (1) is of dimension 6 with $\mathbb{G}_m^4$ (\cite[Chapter V, Section 17]{S1}) as the multiplicative part. It follows
			that the genus of the normalization $X$  of $X^{'}$ is $g(X) = 4$ and $\sigma(X) = \sigma(X')-4$.
				Now we compute $\sigma(X')$ and $a(X')$ by using 
			the action of the Frobenius map $F$ on 
			$H^1(X^{'}, O_{X^{'}})$
			using the basis \cite[Chapter III, Thm. 5.1]{H}
			$$\{\beta = \dfrac{1}{x^ay^bz^c} \mid a+b+c= 6, \text{ }a,b,c\geq 1 \}$$ for $H^1(X^{'}, O_{X^{'}})$. \\
			Note that $$F(\beta)= f^{p-1}\beta^p= f\beta^2= \dfrac{1}{x^{2a-3}y^{2b-3}z^{2c}}+\dfrac{1}{x^{2a-3}y^{2b}z^{2c-3}}+
			\dfrac{1}{x^{2a}y^{2b-3}z^{2c-3}}+\dfrac{\lambda}{x^{2a}y^{2b}z^{2c-6}}$$
			in $H^1(X^{'}, O_{X^{'}})$ as in ( \cite{H}, Chapter IV, Proposition 4.21) and also see \cite[Proposition 14]{st} for explicit formula for the Frobenius map $F$ on $H^1(X^{'}, O_{X^{'}})$.
			Hence by using explicit basis elements 
			$$\beta_1= \dfrac{1}{xy^2z^3}, ~\beta_2= \dfrac{1}{xy^3z^2},
			~\beta_3= \dfrac{1}{x^2yz^3}, ~\beta_4= \dfrac{1}{x^2y^3z}, ~\beta_5= \dfrac{1}{x^3yz^2}, $$ $$ 
			~\beta_6= \dfrac{1}{x^3y^2z^1}, ~\beta_7= \dfrac{1}{x^2y^2z^2}, ~\beta_8= \dfrac{1}{xyz^4}, ~\beta_9= \dfrac{1}{xy^4z}, ~\beta_{10}= \dfrac{1}{x^4yz},$$
			We get $$F(\beta_1)=\beta_3,  ~F(\beta_2)=\beta_4,~F(\beta_3)=\beta_1,~F(\beta_4)=\beta_2,~F(\beta_5)=\beta_6,~F(\beta_6)=\beta_5,$$
			$$F(\beta_7)=\beta_8+\beta_9 + \beta_{10}, ~F(\beta_8)=\lambda \beta_7,~F(\beta_9)=F(\beta_{10})=0.$$
			It is clear that $F$ acts bijectively on the vector space spanned by the basis $\{\beta_i | i=1,2, \cdots , 8 \}$ and $\operatorname{Rank}([F])=8$. Therefore, $\sigma(X')=8$ and $a(X')=2$ by Definition \ref{od}. Thus, we  see that $\sigma(X) = 4$ (Lemma~\ref{GJ1}) and that $X$ is an ordinary curve and so $a(X)=0$ by Definition \ref{od}. One can also use the formula in Remark \ref{rm2} for affine model of $f$ at $z\neq 0$ to show that $X$ is an ordinary curve.  \hfill{$\Box$}

		\end{example}
		
			\begin{remark} \label{rm2}
		
			We conclude from Lemma~\ref{cartier}b) that instead of working
			with Frobenius acting on $H^1(X', {\mc O}_{X'})$, we could have worked with the Cartier operator on $H^0(X', \Omega_{X'})$. The action of the Cartier operator $\mathscr{C}$ on $H^0(X', \Omega_{X'})$ is given by the following formula (\cite{SV}, Theorem 1.1). $$\mathscr{C}(h\dfrac{dx}{f_y})=(\frac{\partial^{2p-2} }{\partial x^{p-1} \partial y^{p-1}} (f^{p-1}h))^{1/p} \frac{dx}{f_y}, $$
			where $f(x,y)=0$ is the dehomogenization of the equation of $X'$ and $h\in k(X')$.
		
		\end{remark}
		
		\begin{example} \label{ex}
			
			This is an example of a pair $X, X'$ such that the $\sigma(X)=\sigma(X') = 1$. 
			
			We take $p = 7$ and consider the curve  $X^{'} \subset {\mathbb P}^2$ \cite[Section 1]{T} given by the equation
			$$f(x,y,z)= x^5+ y^3z^2+ Axyz^3+ Bxz^4 = 0$$ 
			where $A, B$ are non-zero and $A \neq B$.
			$X^{'}$ is a singular curve of arithmetic genus $p_a(X') = 6$. $X'$ has only one singular point $Q=[0:1:0]$, 
			which is analytically isomorphic to the singularity at $(0,0)$ of the plane curve $z^2 = x^5$ i.e. the comletion of the local ring at $Q$ of $X'$ isomorphic to $k[[x,z]]/(z^2-x^5)$.
			\label{ex1} (\cite{T}, Section I) . Hence, Corollary~\ref{cor} applies and we get $\sigma(X)=\sigma(X')$. \\
			Calculating as in Example~\ref{ex2} by using the basis \cite[Chapter III, Thm. 5.1]{H}	$$\{\beta_1= \dfrac{1}{x^3yz}, ~\beta_2= \dfrac{1}{xy^3z},
			~\beta_3= \dfrac{1}{xyz^3}, ~\beta_4= \dfrac{1}{x^2y^2z}, ~\beta_5= \dfrac{1}{x^2yz^2}, 
			~\beta_6= \dfrac{1}{xy^2z^2}  \}$$
			we find 	$$F(\beta_1)=\beta_3 + 5B\beta_2 ,$$
			$$F(\beta_2)= F(\beta_3)= F(\beta_4) = 0,$$
			$$F(\beta_5)= 5AB^2\beta_1 + 5A^2B\beta_5,$$
			$$F(\beta_6)= 4A^3B\beta_2.$$
			Thus, we have 
			$$F^6(\beta_1)= F^6(\beta_2)= F^6(\beta_3)=  F^6(\beta_4)=F^6(\beta_6) = 0 \text{ and} ~F^6(\beta_5)\neq 0.$$
			It follows that $\sigma(X')=1$ by Definition \ref{od} and we get $\sigma(X)=1$.	\hfill{$\Box$} 	
			
		\end{example}
		
		\begin{remark} \label{rm}
			One can adapt the techniques in Example \ref{ex} to find the $p$-rank of curves on more general surfaces. Let $S$ be a smooth projective surface over an algebraically closed field of positive characteristic $p$ with invariants geometric genus $p_g=0$ and irregularity $q=0$. Let $X$ be a projective curve on $S$ with corresponding divisor $D$. We have the following short exact sequence which defines our curve. 
			$$0\longrightarrow \mathcal{O}_S(-D) \longrightarrow \mathcal{O}_S \longrightarrow \mathcal{O}_X \longrightarrow 0.$$ 
			By using the long exact sequence of cohomology obtained from the above short exact sequence, one sees that 
			$$H^1(X,\mathcal{O}_X) \cong H^2(S, \mathcal{O}_S(-D)) $$ 
   
		\end{remark}

\section{A family of curves}	
		
	\par	We will provide an example of a family of smooth complete intersection curves in ${\mb P}^n$. We will compute the $a$-number of the curves in this family and we will obtain a lower bound on the $p$-rank of these curves by using  the action of Frobenius on cohomology.

 \vspace{0.1cm}
  We will use the following example \cite[Section 2.2]{Hi} of smooth integral complete intersection curves :
 \begin{example} \label{ge}[Generalized Fermat Curve]
     Let $X$ be the curve defined as follows:
     
	$$ C^m(\lambda_0,\lambda_1,...,\lambda_{n-2}) := \begin{Bmatrix}  \lambda_0 x_0^m+x^m_1+x^m_2=0\\ \lambda_1 x_0^m+x^m_1+x^m_3=0 \\ \vdots \\ \lambda_{n-2}x_0^m+x^m_1+x^m_n=0\end{Bmatrix} \subset {\mb P}^n $$
 where $\lambda_0,\lambda_1,...,\lambda_{n-2}$ are pairwise different elements of field $k$ with $\lambda_i \neq 0 \text{ for } i=0,1,...,n-2$. We set $f_i = \lambda_ix_0^m+x^m_1+x^m_{i+2} $ for $i=0,1,...,n-2$.
  \end{example}
We will first prove basic equality in the following to compare the number $\operatorname{dim}_k(H^1(X,\mathcal{O}_X))$ which will be computed in Cor.\ref{dim2} and the cardinality of a basis which we will construct in Thm.\ref{basis}.
\begin{proposition} \label{binom}
    We have the following equality $$ \displaystyle\sum_{i=0}^{t}(-1)^i\binom{t+1-i}{t-i}\binom{n+1}{i} = (-1)^t\binom{n-1}{t}$$ for $n,t\in \mathbb{N} \text{ with } n\geq t+1. $
\end{proposition}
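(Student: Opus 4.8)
The plan is to read the left-hand side as a single coefficient in a product of two generating functions. First I would record the elementary simplification $\binom{t+1-i}{t-i}=\binom{t+1-i}{1}=t+1-i$, valid for $0\le i\le t$, so that the claimed identity becomes $$\sum_{i=0}^{t}(-1)^i(t+1-i)\binom{n+1}{i}=(-1)^t\binom{n-1}{t}.$$ Working in the formal power series ring $\Q[[x]]$, set $A(x)=\sum_{i\ge 0}(-1)^i\binom{n+1}{i}x^i=(1-x)^{n+1}$ and $B(x)=\sum_{j\ge 0}(j+1)x^j=(1-x)^{-2}$. By the definition of the Cauchy product, the coefficient of $x^t$ in $A(x)B(x)$ is $\sum_{i=0}^{t}(-1)^i\binom{n+1}{i}\,(t-i+1)$, which is precisely the left-hand side above.

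On the other hand $A(x)B(x)=(1-x)^{n+1}(1-x)^{-2}=(1-x)^{n-1}$, and the binomial theorem gives $[x^t]\,(1-x)^{n-1}=(-1)^t\binom{n-1}{t}$. Comparing the two evaluations of $[x^t]\,A(x)B(x)$ yields the identity. The hypothesis $n\ge t+1$ plays no real role in the argument: if $t>n-1$ then $(1-x)^{n-1}$ has degree $<t$ and $\binom{n-1}{t}=0$, so both sides vanish; the assumption $n\ge t+1$ is only needed so that the right-hand side is a genuine binomial coefficient in the intended application of the next section.

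I do not expect a serious obstacle here; the only points deserving a line of care are that $B(x)=(1-x)^{-2}$ is an identity of formal power series and that $A(x)$ is a polynomial, so their product is a well-defined element of $\Q[[x]]$ whose coefficients may be compared termwise. If one prefers to remain within finite sums, the same result follows by induction on $t$: writing $S_t$ for the left-hand side and using Pascal's rule together with the standard partial alternating sum $\sum_{i=0}^{s}(-1)^i\binom{m}{i}=(-1)^s\binom{m-1}{s}$, one checks that $S_t-S_{t-1}=(-1)^t\binom{n}{t}$, which is exactly the recursion satisfied by $(-1)^t\binom{n-1}{t}$, while the base case $t=0$ is immediate.
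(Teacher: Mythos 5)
Your proof is correct, but it takes a different route from the paper. After the common first step $\binom{t+1-i}{t-i}=t+1-i$, you read the sum as a Cauchy-product coefficient, $[x^t]\,(1-x)^{n+1}(1-x)^{-2}=[x^t]\,(1-x)^{n-1}=(-1)^t\binom{n-1}{t}$, which settles the identity in two lines and also explains where the linear factor $t+1-i$ comes from (convolution with $(1-x)^{-2}$). The paper instead stays entirely within finite sums: it applies Pascal's rule $\binom{n+1}{i}=\binom{n}{i}+\binom{n}{i-1}$, reindexes so the weights $t+1-i$ and $t-i$ cancel down to $1$, reducing the sum to the partial alternating sum $\sum_{i=0}^{t}(-1)^i\binom{n}{i}$, and then telescopes once more with Pascal's rule to reach $(-1)^t\binom{n-1}{t}$. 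Your fallback induction on $t$ (showing $S_t-S_{t-1}=(-1)^t\binom{n}{t}$, the same recursion as $(-1)^t\binom{n-1}{t}$) is essentially the paper's key ingredient repackaged, so either of your arguments is a valid substitute; the generating-function version is shorter and more conceptual, the paper's is more elementary and self-contained. One small caveat: your aside that the hypothesis $n\ge t+1$ "plays no real role" implicitly uses $n\ge 1$ (for $n=0$ the factor $(1-x)^{n-1}$ is not a polynomial and the right-hand side would need the generalized binomial convention), but since the proposition assumes $n\ge t+1\ge 1$ this does not affect the proof.
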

\begin{proof}
    \begin{align*}
 \displaystyle\sum_{i=0}^{t}(-1)^i\binom{t+1-i}{t-i}\binom{n+1}{i} &=  (t+1)\binom{n+1}{0} + \displaystyle\sum_{i=1}^{t}(-1)^i(t+1-i)\binom{n+1}{i}  \\ &= (t+1)\binom{n}{0} + \displaystyle\sum_{i=1}^{t}(-1)^i(t+1-i) 
 \displaystyle \{ \binom{n}{i} + \binom{n}{i-1}  \}  \\
  &= (t+1)\binom{n}{0} + \displaystyle\sum_{i=1}^{t}(-1)^i(t+1-i) 
 \binom{n}{i}  \\& \mathbin{\phantom+} + \displaystyle\sum_{i=1}^{t}(-1)^i(t+1-i)\binom{n}{i-1} \\
  &= (t+1)\binom{n}{0} + \displaystyle\sum_{i=1}^{t}(-1)^i(t+1-i) 
 \binom{n}{i}  \\& \mathbin{\phantom+} + \displaystyle\sum_{i=0}^{t}(-1)^{i+1}(t-i)\binom{n}{i}
\end{align*} 
\begin{align*} 
 & = (t+1-t)\binom{n}{0} + \displaystyle\sum_{i=1}^{t}(-1)^i(t+1-i -(t-i))\binom{n}{i} \\&=  \binom{n}{0}+\displaystyle\sum_{i=0}^{t}(-1)^i\binom{n}{i}  \\&=\binom{n}{0} + \displaystyle\sum_{i=1}^{t}(-1)^i\{ \binom{n-1}{i} + \binom{n-1}{i-1} \}
\\&=  \binom{n-1}{0} + \displaystyle\sum_{i=1}^{t}(-1)^i \binom{n-1}{i} + \displaystyle\sum_{i=1}^{t}(-1)^i\binom{n-1}{i-1} \\&=  \displaystyle\sum_{i=0}^{t}(-1)^i \binom{n-1}{i} + \displaystyle\sum_{i=0}^{t-1}(-1)^{i+1}\binom{n-1}{i}  \\&=  (-1)^t \binom{n-1}{t}.
\end{align*}
\end{proof}
We will now obtain the dimension of $k$-vector space $H^1(X,\mathcal{O}_X)$ as an alternating sum of binomials where $X$ is a complete intersection curve. This result will be crucial for the proof of Thm.\ref{basis}.
 We note that the proof of following result is determined in \cite[Section 2, Thm.1]{AS}by using Hilbert polynomials for projective varieties and in \cite[Section 3, Thm.1]{Kh} by using Newton polyhedra for complete intersection varieties.    
\begin{proposition} \label{dim}
  Let $X_i:f_i=0$ be a degree $m_i$ hypersurface in projective $n$-space
		${\mb P}^n_k$ given by homogenous polynomial $f_i$ of degree $m_i$ for
		$i=1,2,\ldots ,n-1$.Assume the following setup:
  \begin{itemize}
				\item[a)] 
                $Y_i:=X_1\cap X_2 \cap \cdots  \cap X_i$ 
		for $i=1,2,\ldots ,n-1$ and $Y_0= {\mb P}^n_k$ for $n\geq 3,$
				\item[b)] $\operatorname{dim}_k(H^t(Y_{n-t},\mathcal{O}_{Y_{n-t}}(-s))):= h^t(\mathcal{O}_{Y_{n-t}}(-s))$ for $s \geq 0$ and $t=1,2,\cdots , n$,
				\item[c)] $m_0:=0$.
			\end{itemize}

      Then we have the following 
      $$h^t(\mathcal{O}_{Y_{n-t}}(-r)) = \displaystyle\sum_{i=0}^{n-t}(-1)^i\sum_{0=j_0 <j_1 < \cdots < j_{n-t-i} \leq n-t}^{} h^n(\mathcal{O}_{Y_0}(-(m_{j_0} +\cdots + m_{j_{n-t-i}})-r)). $$ 
  \end{proposition}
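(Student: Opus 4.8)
The plan is to prove the identity by descending induction on $t$ --- equivalently, by induction on the number $c := n - t$ of hypersurfaces used to cut out $Y_{n-t}$. Throughout I would use, as is implicit in the hypotheses (and automatic once $Y_{n-1}$ is a genuine complete intersection curve), that $f_1,\dots,f_{n-1}$ form a regular sequence in $k[x_0,\dots,x_n]$, so that each $Y_i$ is a complete intersection of pure dimension $n-i$. The two geometric inputs are then the structure sequence of a hypersurface section and the standard vanishing $H^q(\mathcal{O}_{Y_i}(a)) = 0$ for all twists $a$ and all $q$ with $0 < q < n-i$ (vanishing of the intermediate cohomology of a complete intersection). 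For the base case $t = n$ we have $c = 0$, $Y_0 = {\mb P}^n_k$, and --- since $m_0 = 0$ --- the right-hand side of the asserted formula collapses to the single term $h^n(\mathcal{O}_{Y_0}(-r))$, which is exactly the left-hand side.

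For the inductive step I would fix $t$ with $1 \le t \le n-1$, set $c = n-t$, and assume the formula for $Y_{c-1}$ (of dimension $t+1$, cut out by $X_1,\dots,X_{c-1}$). Since $f_c$ is a non-zero-divisor on $Y_{c-1}$ with vanishing scheme $Y_c$, twisting the structure sequence of $Y_c \subset Y_{c-1}$ by $\mathcal{O}(-r)$ gives
$$0 \longrightarrow \mathcal{O}_{Y_{c-1}}(-m_c - r) \longrightarrow \mathcal{O}_{Y_{c-1}}(-r) \longrightarrow \mathcal{O}_{Y_c}(-r) \longrightarrow 0.$$
In the associated long exact cohomology sequence, $H^t(\mathcal{O}_{Y_{c-1}}(\,\cdot\,)) = 0$ because $0 < t < t+1 = \dim Y_{c-1}$, and $H^{t+1}(\mathcal{O}_{Y_c}(-r)) = 0$ because $t+1 > t = \dim Y_c$; this leaves the short exact sequence
$$0 \longrightarrow H^t(\mathcal{O}_{Y_c}(-r)) \longrightarrow H^{t+1}(\mathcal{O}_{Y_{c-1}}(-m_c - r)) \longrightarrow H^{t+1}(\mathcal{O}_{Y_{c-1}}(-r)) \longrightarrow 0,$$
whence $h^t(\mathcal{O}_{Y_c}(-r)) = h^{t+1}(\mathcal{O}_{Y_{c-1}}(-m_c - r)) - h^{t+1}(\mathcal{O}_{Y_{c-1}}(-r))$. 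Into the two terms on the right I would substitute the inductive formula for $h^{t+1}(\mathcal{O}_{Y_{c-1}}(-s))$, taken with $s = m_c + r$ and with $s = r$ respectively.

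What remains is a purely combinatorial reconciliation of the resulting expression with the claimed formula for $Y_c$. For $S \subseteq \{0,1,\dots,c\}$ put $\sigma_S = \sum_{j\in S} m_j$; using $\sigma_T + m_c = \sigma_{T\cup\{c\}}$, both the substituted expression and the claimed formula become signed sums of the numbers $h^n(\mathcal{O}_{Y_0}(-\sigma_S - r))$ indexed by those $S$ with $0 \in S$. In the claimed formula such an $S$ carries coefficient $(-1)^{c+1-|S|}$, since for fixed $i$ the inner sum runs over the $S$ of size $c-i+1$. In the substituted expression, an $S$ with $c \in S$ arises exactly once, from the half carrying the extra $m_c$ (take $T = S \setminus \{c\}$), with coefficient $(-1)^{i}$ where $i = c+1-|S|$; an $S$ with $c \notin S$ arises exactly once, from the other half (take $T = S$), with coefficient $-(-1)^{i}$ where $i = c-|S|$. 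In both cases the coefficient is $(-1)^{c+1-|S|}$, the two expressions coincide, and the induction closes.

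I expect the only genuine point of care to be this last re-indexing: expanding the nested sums while keeping the signs and the normalisation $j_0 = 0$ aligned, and verifying that each subset of $\{0,\dots,c\}$ containing $0$ is produced exactly once with the right sign. The geometric ingredients --- the hypersurface-section structure sequence and the vanishing of intermediate cohomology of complete intersections --- are entirely standard; it is, however, worth recording the regular-sequence (proper-intersection) hypothesis explicitly, since the proposition as stated omits it and neither the vanishing invoked above nor the displayed formula holds without it.
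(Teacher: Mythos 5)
Your proposal is correct and follows essentially the same route as the paper: induction on the number of hypersurfaces, the twisted structure sequence of a hypersurface section combined with the vanishing of intermediate cohomology of complete intersections (and vanishing above the dimension) to get $h^t(\mathcal{O}_{Y_c}(-r)) = h^{t+1}(\mathcal{O}_{Y_{c-1}}(-m_c-r)) - h^{t+1}(\mathcal{O}_{Y_{c-1}}(-r))$, and then the same subset re-indexing according to whether the last degree $m_c$ occurs. The only cosmetic differences are your trivial base case $t=n$ (the paper starts at one hypersurface) and your explicit remark about the regular-sequence hypothesis, which the paper leaves implicit.
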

  \begin{proof}
  
      We will use induction on the natural number $n-t$ as follows: \\
      
      For $n-t =1$, we will use the short exact sequence 
      $$0\longrightarrow \mathcal{O}_{Y_0}(-m_1-r) \longrightarrow \mathcal{O}_{Y_0}(-r) \longrightarrow \mathcal{O}_{Y_1}(-r) \longrightarrow 0.$$ 
      We note that  we have the following (\cite[Section 78, Proposition 5]{S2});
      \begin{itemize}
				\item[a)] $H^j(Y_i,\mathcal{O}_{Y_i}(m))=0$ for $m\leq 0$, $0<j<\operatorname{dim}Y_i=n-i$,
				\item[b)] $H^0(Y_i,\mathcal{O}_{Y_i}(m))=0$ for $m< 0$,
				\item[c)] $H^0(Y_i,\mathcal{O}_{Y_i})=k$,
			\end{itemize}
   for complete intersection varieties. Therefore, we will obtain the following short exact sequence of cohomology groups
   $$0 \longrightarrow H^{n-1}(Y_1,\mathcal{O}_{Y_1}(-r)) \longrightarrow H^n(Y_0,\mathcal{O}_{Y_0}(-m_1-r)) \longrightarrow H^n(Y_0,\mathcal{O}_{Y_0}(-r))\longrightarrow 0.$$
   Hence, we get 
    \begin{align*}
 h^{n-1}(\mathcal{O}_{Y_1}(-r)) &= h^n(\mathcal{O}_{Y_1}(-m_1-r)) - h^n(\mathcal{O}_{Y_1}(-r))   \\ &= \displaystyle\sum_{i=0}^{1}(-1)^i\sum_{0=j_0 <j_1 < \cdots < j_{1-i} \leq 1}^{} h^n(\mathcal{O}_{Y_0}(-(m_{j_0} +\cdots + m_{j_{1-i}})-r)).
\end{align*} 
As $Y_{n-t}$ is the complete intersection of dimension $t$ in $Y_{n-t-1}$ cut out by hypersurface $X_{n-t}:f_{n-t}=0$ of degree $m_{n-t}$, we reach the following short exact sequence as in above,
    \begin{align*}
0 \longrightarrow H^t(Y_{n-t},\mathcal{O}_{Y_{n-t}}(-r)) &\longrightarrow H^{t+1}(Y_{n-t-1},\mathcal{O}_{Y_{n-t-1}}(-m_{n-t}-r)) \longrightarrow  \\ & \longrightarrow H^{t+1}(Y_{n-t-1},\mathcal{O}_{Y_{n-t-1}}(-r))\longrightarrow 0.
\end{align*} 
Hence, by using induction, one sees that 

\begin{align*}
  h^t(\mathcal{O}_{Y_{n-t}}(-r)) &= h^{t+1}(\mathcal{O}_{Y_{n-t-1}}(-m_{n-t}-r)) - h^{t+1}(\mathcal{O}_{Y_{n-t-1}}(-r))   \\ &= \displaystyle\sum_{i=0}^{n-t-1}(-1)^i\sum_{0=j_0 <j_1 < \cdots < j_{n-t-1-i} \leq n-t-1}^{} h^n(\mathcal{O}_{Y_0} (-(m_{j_0} +\cdots + m_{j_{n-t-1-i}})-m_{n-t}-r)) \\&  - \displaystyle\sum_{i=0}^{n-t-1}(-1)^i\sum_{0=j_0 <j_{1} < \cdots < j_{n-t-1-i} \leq n-t-1}^{} h^n(\mathcal{O}_{Y_0}(-(m_{j_0} +\cdots + m_{j_{n-t-1-i}})-r)) \\&=
  h^n(\mathcal{O}_{Y_0} (-(m_{j_1} +\cdots + m_{n-t})-r)) \\&  + \displaystyle\sum_{i=1}^{n-t-1}(-1)^i\sum_{0=j_0 <j_1 < \cdots < j_{n-t-1-i} \leq n-t-1}^{} h^n(\mathcal{O}_{Y_0} (-(m_{j_0} +\cdots + m_{j_{n-t-1-i}})-m_{n-t}-r)) \\&  - \displaystyle\sum_{i=1}^{n-t-1}(-1)^{i-1}\sum_{0=j_0 <j_{1} < \cdots < j_{n-t-i} \leq n-t-1}^{} h^n(\mathcal{O}_{Y_0}(-(m_{j_0} +\cdots + m_{j_{n-t-i}})-r)) \\& +
   (-1)^{n-t}h^n(\mathcal{O}_{Y_0}(-r)).
\end{align*} 

 For $1\leq i \leq n-t-1$, any $i$ elements subset of $\{ m_1,\cdots, m_{n-t}\}$ either contains $m_{n-t}$ or not. Therefore, any subset of $\{ m_1,\cdots, m_{n-t}\}$ of cardinality $i$ can be constructed in such a way that one either chooses $i$ element(s) from the set $\{ m_1,\cdots, m_{n-t-1}\}$ or chooses $i-1$ element(s) from the set $\{ m_1,\cdots, m_{n-t}\}$ and adds $m_{n-t-1}$ in it. Hence, one finds that 
 \begin{align*}
   h^t(\mathcal{O}_{Y_{n-t}}(-r)) &=  h^n(\mathcal{O}_{Y_0} (-(m_1 +\cdots + m_{n-t})-r)) + (-1)^{n-t}h^n(\mathcal{O}_{Y_0}(-r))  \\& + \displaystyle \sum_{i=1}^{n-t-1}(-1)^i\sum_{0=j_0 <j_1 < \cdots < j_{n-t-i} \leq n-t}^{} h^n(\mathcal{O}_{Y_0} (-(m_{j_0} +\cdots + m_{j_{n-t-i}})-r))  \\& = \displaystyle \sum_{i=0}^{n-t}(-1)^i\sum_{0=j_0 <j_1 < \cdots < j_{n-t-i} \leq n-t}^{} h^n(\mathcal{O}_{Y_0} (-(m_{j_0} +\cdots + m_{j_{n-t-i}})-r)).
\end{align*}

  \end{proof}
  \begin{corollary} \label{dim2}
     If a curve $X$ is given as the generalized Fermat curve $ C^m(\lambda_0,\lambda_1,...,\lambda_{n-2})$, then we have $$ \operatorname{dim}_kH^1(X,\mathcal{O}_X) = \displaystyle\sum_{i=0}^{n-1}(-1)^i\binom{n-1}{i}\binom{(n-i-1)m-1}{n}.$$
  \end{corollary}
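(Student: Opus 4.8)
The plan is to specialise Proposition~\ref{dim} to the present situation and then evaluate the resulting alternating sum using the cohomology of line bundles on $\mathbb{P}^n$. The generalised Fermat curve $X = C^m(\lambda_0,\lambda_1,\ldots,\lambda_{n-2})$ is cut out in $\mathbb{P}^n_k$ by the $n-1$ homogeneous forms $f_0,f_1,\ldots,f_{n-2}$, each of degree $m$; since the $\lambda_i$ are pairwise distinct and nonzero it is a smooth integral complete intersection, of dimension $n-(n-1)=1$. I would therefore invoke Proposition~\ref{dim} with $t=1$, $r=0$ and $m_1=m_2=\cdots=m_{n-1}=m$ (recall $m_0=0$), identifying $Y_{n-1}$ with $X$ and $Y_0$ with $\mathbb{P}^n_k$, to get
$$\operatorname{dim}_k H^1(X,\mathcal{O}_X) = \sum_{i=0}^{n-1}(-1)^i \sum_{0=j_0<j_1<\cdots<j_{n-1-i}\leq n-1} h^n\big(\mathcal{O}_{\mathbb{P}^n_k}(-(m_{j_0}+\cdots+m_{j_{n-1-i}}))\big).$$

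Next I would collapse the inner sum. Every admissible tuple has $j_0=0$, contributing $m_0=0$, while $j_1<\cdots<j_{n-1-i}$ ranges over the $(n-1-i)$-element subsets of $\{1,\ldots,n-1\}$, each index contributing $m$; hence $m_{j_0}+\cdots+m_{j_{n-1-i}} = (n-1-i)m$ regardless of the tuple, and there are exactly $\binom{n-1}{n-1-i}=\binom{n-1}{i}$ tuples. This turns the double sum into
$$\operatorname{dim}_k H^1(X,\mathcal{O}_X) = \sum_{i=0}^{n-1}(-1)^i\binom{n-1}{i}\, h^n\big(\mathcal{O}_{\mathbb{P}^n_k}(-(n-1-i)m)\big).$$

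It then remains to compute $h^n(\mathcal{O}_{\mathbb{P}^n_k}(-d))$ for $d=(n-1-i)m\geq 0$. By Serre duality on $\mathbb{P}^n_k$ one has $H^n(\mathbb{P}^n_k,\mathcal{O}(-d))\cong H^0(\mathbb{P}^n_k,\mathcal{O}(d-n-1))^{\vee}$, so $h^n(\mathcal{O}_{\mathbb{P}^n_k}(-d)) = \binom{d-1}{n}$ with the standard convention that this binomial is $0$ whenever $d-1<n$ (in particular for $d=0$, consistent with $H^n(\mathbb{P}^n_k,\mathcal{O})=0$). Substituting $d=(n-1-i)m$ yields $h^n(\mathcal{O}_{\mathbb{P}^n_k}(-(n-1-i)m)) = \binom{(n-i-1)m-1}{n}$, and the claimed identity drops out; the $i=n-1$ summand vanishes, as it should. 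I do not expect a genuine obstacle here: the argument is a direct specialisation of Proposition~\ref{dim} followed by a textbook computation, and the only points demanding care are the binomial-coefficient convention just mentioned and the verification that $C^m(\lambda_0,\ldots,\lambda_{n-2})$ really is a complete intersection (so that the vanishing results from \cite[Section~78, Proposition~5]{S2} underlying Proposition~\ref{dim} apply) — which is exactly what smoothness of the generalised Fermat curve guarantees.
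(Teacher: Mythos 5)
Your proposal is correct and is essentially the paper's own argument: both specialize Proposition~\ref{dim} to the curve case ($t=1$, $r=0$, all degrees $m_1=\cdots=m_{n-1}=m$), collapse the inner sum to $\binom{n-1}{i}$ identical terms $h^n\big(\mathcal{O}_{\mathbb{P}^n}(-(n-1-i)m)\big)$, and then evaluate this as $\binom{(n-i-1)m-1}{n}$. The only cosmetic difference is that the paper obtains the last count from the explicit monomial description of $H^n(\mathbb{P}^n,\mathcal{O}(-s))$ in Hartshorne III.5.1, whereas you use Serre duality; both give the same binomial coefficient with the same vanishing convention.
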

  \begin{proof}
      As $ n-t = 1 \text{ and }m_1=\cdots = m_{n-1}=m$ for generalized Fermat curves in ${\mb P^n}$, We have the following equality $$ h^1(\mathcal{O}_X) = \displaystyle\sum_{i=0}^{n-1}(-1)^i\binom{n-1}{n-1-i} h^n(\mathcal{O}_{\mb P^n}(-(n-i-1)m)).$$
      However, one has the following description of the cohomology group 
       $$ H^n({\mb P}^n,\mathcal{O}_{{\mb P}^n}(-s)) = \operatorname{Span}_k\left(\{ \dfrac{1}{ x_0^{\alpha_0}x_1^{\alpha_1} \cdots x_n^{\alpha_n}} \mid \displaystyle\sum_{i=0}^{n} \alpha_i = s, \text{ } \alpha_i \geq 1 \}  \right)$$ for $ s\geq n-2$ \cite[Chapter III, Thm. 5.1]{H}. As a result, we conclude that 
      $$ h^1(\mathcal{O}_X) = \displaystyle\sum_{i=0}^{n-1}(-1)^i\binom{n-1}{i}\binom{(n-i-1)m-1}{n}.$$
  \end{proof}
  Our next work is to construct an explicit basis for the $k$-vector space $ H^1(X,\mathcal{O}_X) $ where $X$ is the generalized Fermat curve as in Ex.$\ref{ge}$. Recall that elements $\alpha$ of $H^1(X,\mathcal{O}_X)  $ are described as follows: 
  $$ \alpha \in  H^n({\mb P}^n,\mathcal{O}_{{\mb P}^n}(-(n-1)m)) \text{ with } \alpha f_i = 0 \text{ in } H^n({\mb P}^n,\mathcal{O}_{{\mb P}^n}(-(n-2)m))$$ for $i=0,1,\cdots , n-2$ by Thm. 
  \newpage
  We set $\alpha = x^{-a_0}_0 x^{-a_1}_1  \cdots x^{-a_n}_n$ such that $a_0+a_1 + \cdots + a_n = (n-1)m$ for $a_i \geq 1, i=0,1,\cdots,n+1$ and set $f_i= \lambda_ix_0^m + x_1^m + x^m_{i+2}$ for $i=0,1,\cdots,n-2$. For 
the cohomology class $\alpha $, we have \begin{align*}
 x_0^{-\alpha_0}x_1^{-\alpha_1} \cdots x_n^{-\alpha_n}f_i &= \lambda_i\,x_0^{-\alpha_0+m}\,x_1^{-\alpha_1} \, \cdots \, x_n^{-\alpha_n} \\ & \mathbin{\phantom+}  + \, x_0^{-\alpha_0} \, x_1^{-\alpha_1+m} \,  \cdots \, x_n^{-\alpha_n}  \\
  & \mathbin{\phantom+}  + \, x_0^{-\alpha_0} \, \cdots \,  x_{i+1}^{-\alpha_{i+1}}\, x_{i+2}^{-\alpha_{i+2}+m}\,  x_{i+3}^{-\alpha_{i+3}} \,  \cdots \,  x_n^{-\alpha_n}.  
\end{align*} 
 Hence we now see that 
\begin{align*}
 x_0^{-a_0}x_1^{-a_1} \cdots x_n^{-a_n}f_i &=  \lambda_i\,x_0^{-a_0+m}\,x_1^{-a_1} \, \cdots \, x_n^{-a_n} \, + \, x_0^{-a_0} \, x_1^{-a_1+m} \,  \cdots \, x_n^{-a_n}  \\
  &\mathbin{\phantom+} + \, x_0^{-a_0} \, \cdots \, x_{i+2}^{-a_{i+2}+m} \,  \cdots \,  x_n^{-a_n} \\
  & = 0
 \end{align*}
if and only if $ -a_{i+2}+m \geq 0, \text{ } -a_0+m \geq 0 \text{ and } -a_1+m \geq 0$ for $i=0,1, \ldots , n-2$ if and only if $ a_i \leq m $ for $i= 0,1, \ldots , n$. We define set $S(0,0)$ as 

\[
   S(0,0) = \left\{  (a_0,a_1,\ldots,a_n) \in \mathbb{N}^{n+1} \ \middle\vert \begin{array}{l}
 \displaystyle\sum_{j=0}^{n} \alpha_j = (n-1)m , \  0 < a_i \leq m, \ i=0,1,\cdots,n  \\

  \end{array}  \right\}.
\]
 Note that $ \operatorname{Span}_k( \{ x_0^{-a_0}x_1^{-a_1} \cdots x_n^{-a_n} | (a_0, \cdots, a_n) \in S(0,0) \} ) \subsetneq H^1(X,\mathcal{O}_X)$. Therefore, we will consider $ (a_0,a_1,\ldots,a_n) \in \mathbb{N}^{n+1}$ with either $a_0 > m $ or $a_1 > m $ and produce a basis element for $H^1(X,\mathcal{O}_X) $. Let us define sets $S(r,s)$ for $r+s \neq 0$ as follows:

 \[
   S(r,s) = \left\{  (a_0,a_1,\ldots,a_n) \in \mathbb{N}^{n+1} \ \middle\vert \begin{array}{l}
 \displaystyle\sum_{j=0}^{n} a_j = (n-1)m, \   0<a_i \leq m \text{ for } i=2,\cdots ,n, 
  \\
  rm< a_0 \leq (r+1)m \text{ and }  sm< a_1 \leq (s+1)m 
  \end{array}  \right\}.
\]
For any $ (a_0,a_1,\ldots,a_n) \in S(r,s) $ and $t=0,1, \cdots , n-2$, we set $\beta_r^s(a_0,\cdots,a_n)$ we have 
 \begin{align*}
  x_0^{-a_0}x_1^{-a_1} \cdots x_n^{-a_n}f_t &=  \lambda_tx_0^{-a_0+m}x_1^{-a_1} \cdots x_n^{-a_n} + x_0^{-a_0}x_1^{-a_1+m} \cdots x_n^{-a_n} \\
  &= x_{t+2}^m  (\lambda_tx_0^{-a_0+m}x_1^{-a_1}  \cdots  x_{t+2}^{-a_{t+2}-m}   \cdots   x_n^{-a_n}  \\ &\mathbin{\phantom+} + \, x_0^{-a_0}x_1^{-a_1+m}  \cdots x_{t+2}^{-a_{t+2}-m}   \cdots   x_n^{-a_n}) \\& \neq 0. 
 \end{align*}
 We next consider an element of the form

 \begin{align*}
 x_0^{-a_0}x_1^{-a_1} \cdots x_n^{-a_n} &-  x_0^{-a_0+m}x_1^{-a_1}\left( \sum_{i=2}^{n} \lambda_{i-2}x_i^{-a_i-m}\prod_{j\geq2,j\neq i}^{n} x_j^{-a_r}  \right) \\
  &- x_0^{-a_0}x_1^{-a_1+m}\left( \sum_{i=2}^{n} x_i^{-a_i-m}\prod_{j\geq2,j\neq i}^{n} x_j^{-a_r}  \right). 
 \end{align*}
If one multiplies this element by $f_t$, one gets 
\begin{align*}
  &-\lambda_t x_0^{-a_0+2m}x_1^{-a_1}\left( \sum_{i=2}^{n} \lambda_{i-2}x_i^{-a_i-m}\prod_{j\geq2,j\neq i}^{n} x_j^{-a_r}  \right) - x_0^{-a_0+m}x_1^{-a_1+m}\left( \sum_{i=2}^{n} x_i^{-a_i-m}\prod_{j\geq2,j\neq i}^{n} x_j^{-a_r}  \right) \\
  &- \lambda_t x_0^{-a_0+m}x_1^{-a_1+m}\left( \sum_{i=2}^{n} \lambda_{i-2}x_i^{-a_i-m}\prod_{j\geq2,j\neq i}^{n} x_j^{-a_r}  \right) - x_0^{-a_0}x_1^{-a_1+2m}\left( \sum_{i=2}^{n} x_i^{-a_i-m}\prod_{j\geq2,j\neq i}^{n} x_j^{-a_r}  \right)
\end{align*}
and one produces an element 
\begin{align*}
 x_0^{-a_0}x_1^{-a_1} \cdots x_n^{-a_n} &-  x_0^{-a_0+m}x_1^{-a_1}\left(\sum_{i=2}^{n} \lambda_{i-2}x_i^{-a_i-m}\prod_{j\geq2,j\neq i}^{n} x_j^{-a_j}\right) \\
  &- x_0^{-a_0}x_1^{-a_1+m}\left( \sum_{i=2}^{n} x_i^{-a_i-m}\prod_{j\geq2,j\neq i}^{n} x_j^{-a_j}  \right) \\
  &+ x_0^{-a_0+2m}x_1^{-a_1}\left( \sum_{2\leq i_1<i_2\leq n}^{} \lambda_{i_1-2}\lambda_{i_2-2}x_{i_1}^{-a_{i_1}-m}x_{i_2}^{-a_{i_2}-m}\prod_{j\geq2,j\neq i_1,i_2}^{n} x_j^{-a_j}  \right) \\
  &+ x_0^{-a_0+m}x_1^{-a_1+m}\left( \sum_{2\leq i_1<i_2\leq n}^{} (\lambda_{i_1-2}+\lambda_{i_2-2})x_{i_1}^{-a_{i_1}-m}x_{i_2}^{-a_{i_2}-m}\prod_{j\geq2,j\neq i_1,i_2}^{n} x_j^{-a_j}  \right)  \\
  &+ x_0^{-a_0}x_1^{-a_1+2m}\left( \sum_{2\leq i_1<i_2\leq n}^{} x_{i_1}^{-a_{i_1}-m}x_{i_2}^{-a_{i_2}-m}\prod_{j\geq2,j\neq i_1,i_2}^{n} x_j^{-a_j}  \right).
 \end{align*}
This process must stop after finitely many steps because we have $rm< a_0 \leq (r+1)m \text{ and } $ $  sm< a_1 \leq (s+1)m.$ Therefore, if one follows this process inductively, one reaches a basis element in $H^n({\mb P}^n,\mathcal{O}_{{\mb P}^n}(-(n-2)m))$ of the form
\[  \alpha_r^s(a_0,a_1,\cdots,a_n) =  x_0^{-a_0}x_1^{-a_1} \cdots x_n^{-a_n} + \displaystyle \sum_{l+q=1,l\leq r, q\leq s}^{r+s} (-1)^{l+q}x_0^{-a_0 + lm}x_1^{-a_1+qm} \varphi_l^q\] where 
 \begin{align*}
\text{     } &\varphi_l^q = \displaystyle\sum_{2\leq i_1< \cdots <i_{l+q}\leq n}^{} \beta_l^q( i_1,\cdots,i_{l+q}) x_{i_1}^{-a_{i_1}-m}\cdots x_{i_{l+q}}^{-a_{i_{l+q}}-m}\displaystyle\prod_{j\geq2,j\neq i_1,\cdots,i_{l+q}}^{n} x_j^{-a_j} 
 \end{align*} 
 and, the coefficient $\beta_l^q(i_1,\cdots,i_{l+q}) $ is determined inductively by the coefficients $\beta_{l-1}^q(i'_1,\cdots,i'_{l+q-1}) $ and $\beta_l^{q-1}(i'_1,\cdots,i'_{l+q-1}) $ for $l+q \geq 2$. If we set $\alpha_0^0(a_0,a_1,\cdots,a_n) =  x_0^{-a_0}x_1^{-a_1} \cdots x_n^{-a_n} $ for all $(a_0,a_1,\cdots,a_n) \in S(0,0)$ and define the subset 
\[ 
\mathcal{B}(r,s) = \left \{ \alpha_r^s(a_0,a_1,\cdots,a_n) \mid (a_0,a_1, \cdots, a_n) \in S(r,s) \right \}  
\]
of $H^n({\mb P}^n,\mathcal{O}_{{\mb P}^n}(-(n-2)m))$ for $0\leq r+s \leq n-2$. Hence we state the following Theorem 
\begin{t2}
If we assume above setup, then we have that the union $$ \mathcal{B} = \displaystyle\bigcup_{r+s=0}^{n-2}\mathcal{B}(r,s)$$ is a basis for the cohomology group  $H^1(X,\mathcal{O}_X).$
\end{t2} 
\begin{proof}
For any $\alpha_r^s(a_0,a_1,\cdots,a_n) \in \mathcal{B}(r,s) $, since $ \alpha_r^s(a_0,a_1,\cdots,a_n)  $ is uniquely determined by $(a_0,a_1,\cdots, a_n) \in S(r,s)$, we conclude that $$ \mathcal{B} = \displaystyle\bigcup_{r+s=0}^{n-2}\mathcal{B}(r,s)$$ is linearly independent subset of $ H^n({\mb P}^n,\mathcal{O}_{{\mb P}^n}(-(n-2)m)).$ To finish the proof, we will  show the following:
\begin{itemize}
    \item[a)]  $\mathcal{B} \subset H^1(X,\mathcal{O}_X) $,
    \item[b)] $\operatorname{Card}(\mathcal{B}) = \operatorname{dim}_k( H^1(X,\mathcal{O}_X) )$.
\end{itemize}
We note that $$f_{t-2}(-1)^{r+s}x_0^{-a_0 + rm}x_1^{-a_1+sm} \varphi_r^s = (-1)^{r+s}x_0^{-a_0 + rm}x_1^{-a_1+sm}x_t^m\varphi^s_r$$ for $t\geq 2$ and, by above construction, $ \beta_1^0(t)= \lambda_{t-2}, \, \beta_0^1(t)=1$ are determined by the coefficient $ \beta_0^0=1$ of  $x_0^{-a_0}x_1^{-a_1} \cdots x_n^{-a_n} $. We now assume $ \beta_{r-1}^s(i_1,\cdots,i_{r+s-1})$ and $ \beta_r^{s-1}(i_1,\cdots,i_{r+s-1})$ are uniquely determined by the previous coefficients for any 
$ \{i_1,\cdots,i_{r+s-1}\} \subset \{1,\cdots,n+1\} $. We find
\begin{align*}
x_t^m \varphi_r^s & = x_t^m\displaystyle\sum_{\substack{ 2\leq i_1< \cdots <i_{r+s}\leq n} } \beta_r^s( i_1,\cdots,i_{r+s}) x_{i_1}^{-a_{i_1}-m}\cdots x_{i_{r+s}}^{-a_{i_{r+s}}-m}\displaystyle\prod_{j\geq2,j\neq i_1,\cdots,i_{l+q}}^{n} x_j^{-a_j} \\ &= \displaystyle\sum_{\substack{ 2\leq i'_1< \cdots <i'_{r+s-1}\leq n \\ t\notin  \{i'_1,\cdots, i'_{r+s-1} \}} } \beta_r^s( i'_1,\cdots,t,\cdots ,i'_{r+s-1}) x_{i'_1}^{-a_{i'_1}-m}\cdots x_{i'_{r+s-1}}^{-a_{i'_{r+s-1}}-m}\displaystyle\prod_{j\geq2,j\neq i'_1,\cdots,i'_{r+s-1}}^{n} x_j^{-a_j}.
 \end{align*} 
As 
\begin{align*}
\text{     }&\mathbin{\phantom+} \lambda_{t-2}x_0^m \left (x_0^{-a_0 + (r-1)m}x_1^{-a_1+sm} \varphi_{r-1}^s \right) + x_1^m \left (x_0^{-a_0 + rm}x_1^{-a_1+(s-1)m} \varphi_r^{s-1} \right)
 \\ &= x_0^{-a_0 + rm}x_1^{-a_1+sm}\left ( \lambda_{t-2}\varphi_{r-1}^s + \varphi_r^{s-1} \right),
 \end{align*} 
 we set $$  \beta_r^s( i'_1,\cdots,t,\cdots ,i'_{r+s-1}) = \lambda_{t-2} \beta_{r-1}^s( i'_1,\cdots ,i'_{r+s-1}) + \beta_{r}^{s-1}( i'_1,\cdots ,i'_{r+s-1}) $$ for each $t$. Therefore, the terms of   $$f_{t-2}(-1)^{r+s}x_0^{-a_0 + rm}x_1^{-a_1+sm} \varphi_r^s $$ is cancelled by some part of  $$\lambda_{t-2}x_0^m(-1)^{r+s-1}x_0^{-a_0 + (r-1)m}x_1^{-a_1+sm} \varphi_{r-1}^s $$ and  $$x_1^m(-1)^{r+s-1}x_0^{-a_0 + rm}x_1^{-a_1+(s-1)m} \varphi_r^{s-1}. $$ By using induction on  $(r+s)-(l+q)$, if we assume that the component  $$f_{t-2}(-1)^{l+q}x_0^{-a_0 + lm}x_1^{-a_1+qm} \varphi_l^q $$ of  $\alpha_r^s(a_0,a_1,\cdots,a_n) $ is cancelled by some part of $$\lambda_{t-2}x_0^m(-1)^{l+q-1}x_0^{-a_0 + (l-1)m}x_1^{-a_1+qm} \varphi_{l-1}^q $$ and  $$x_1^m(-1)^{l+q-1}x_0^{-a_0 + lm}x_1^{-a_1+(q-1)m} \varphi_l^{q-1} $$ for $0\leq l+q \leq r+s-2$, we see that 
\begin{align*}
f_{t-2}\alpha_r^s(a_0,a_1,\cdots,a_n) & =  (\lambda_{t-2}x_0^m+x_1^m)x_0^{-a_0}x_1^{-a_1} \cdots x_n^{-a_n}  \\& \mathbin{\phantom+} - x_t^mx_0^{-a_0 + m}x_1^{-a_1} \varphi_1^0 \\& \mathbin{\phantom+} - x_t^mx_0^{-a_0 }x_1^{-a_1+m} \varphi_0^1
 \end{align*} 
 after cancellation by induction. However, one has 
 \begin{align*}
\text{         } & x_t^mx_0^{-a_0 + m}x_1^{-a_1} \varphi_1^0= \lambda_{t-2} x_0^{-a_0 + m}x_1^{-a_1} x_2^{-a_2}\cdots x_n^{-a_n} \\& x_t^mx_0^{-a_0 }x_1^{-a_1+m} \varphi_0^1= x_0^{-a_0 }x_1^{-a_1+m}x_2^{-a_2}\cdots x_n^{-a_n}
 \end{align*} 
 as $\beta_1^0(t)=\lambda_{t-2} \text{ and } \beta_0^1(t)=1 $.  As a result, we reach  $$ f_{t-2}\alpha_r^s(a_0,a_1,\cdots,a_n) = 0 $$   for $2\leq t \leq n+1$. Hence,  $\mathcal{B} \subset H^1(X,\mathcal{O}_X) $. \\ \\ We now compute the cardinality of $\mathcal{B} $:
 $$ \operatorname{Card}(\mathcal{B})= \displaystyle\sum_{\substack{r\geq 0, \, s\geq 0 \\ 0\leq r+s \leq n-3}}^{}\operatorname{Card}(S(r,s))= \displaystyle\sum_{t=0}^{n-3} \binom{t+1}{t}\operatorname{Card}(S(t,0)).$$
 Recall that the set $S(t,0)$ is defined as 
  \[
   S(t,0) = \left\{  (a_0,a_1,\ldots,a_n) \in \mathbb{N}^{n+1} \ \middle\vert \begin{array}{l}
 \displaystyle\sum_{j=0}^{n} a_j = (n-1)m, \   0<a_i \leq m \text{ for } i=1,\cdots ,n, 
  \\
  tm< a_0 \leq (t+1)m  
  \end{array}  \right\}.
\]
Let us compute the cardinality $\operatorname{Card}(S(t,0))$. We are looking for non-negative integer solutions of the problem: \\
 $$ \left\{ 
    \begin{array}{lr}
        \displaystyle\sum_{j=0}^{n} \alpha_j = (n-t-1)m-(n+1) \\
        \alpha_j \leq m-1 \text{ for } j=0,\ldots ,n
    \end{array}
\right\}  $$
 Let $N(t,0)$ be the number of all non-negative integer solutions of \\ $ \displaystyle\sum_{j=0}^{n} \alpha_j = (n-t-1)m-(n+1) $ and $N(t,i)$ be the number of non-negative integer solutions of $ \displaystyle\sum_{j=0}^{n} \alpha_j = (n-t-1)m-(n+1) $ such that at least $i$ of $ \alpha_0, \ldots, \alpha_n$ is greater than or equal to $m$ for $i\geq 1$. Then by the principle of Inclusion-Exclusion we find $ \operatorname{Card}(S(t,0))= \displaystyle \sum_{i=0}^{n-t-2} (-1)^i N(t,i)$ where 
 $$ N(t,i) = \binom{n+1}{i} \operatorname{Card} \left\{ 
    \begin{array}{lr}
        (\alpha_0,\ldots, \alpha_n):\displaystyle\sum_{j=0}^{n} \alpha_j = (n-t-1)m-(n+1)-im
    \end{array}
\right\} $$ for $i=0,\ldots,n-t-2$. Hence $N(t,i)= \binom{n+1}{i} \binom{(n-t-i-1)m-1}{n}$ for $i=0,\ldots,n-t-2$. Therefore, we have
\begin{align*}
\hspace{0.7cm} \operatorname{Card}(\mathcal{B})  &=  \displaystyle\sum_{t=0}^{n-2} \binom{t+1}{t}\operatorname{Card}(S(t,0))  \\& = \displaystyle\sum_{t=0}^{n-2} \binom{t+1}{t}\sum_{i=0}^{n-t-2} (-1)^iN(t,i) \\& = \displaystyle\sum_{t=0}^{n-2} \binom{t+1}{t}\sum_{i=0}^{n-t-2} (-1)^i \binom{n+1}{i} \binom{(n-t-i-1)m-1}{t} \\& = \displaystyle\sum_{t=0}^{n-2} \left[ \sum_{i=0}^{t} (-1)^i \binom{t+1-i}{t-i} \binom{n+1}{i}\right ]\binom{(n-t-1)m-1}{n} \\&= *  
\end{align*}
From Prop.\ref{binom} and by Cor.\ref{dim2},  we see 
\begin{align*}
  \text{     } * &= \displaystyle\sum_{t=0}^{n-2} (-1)^t  \binom{n-1}{t}  \binom{(n-t-1)m-1}{n} \\&=
  \operatorname{dim}_kH^1(X,\mathcal{O}_X).
\end{align*}
 As a result, we complete the proof.
\end{proof}
 
  We now assume that
 $  \text{char}(k) = 2 \text{ and } m\geq 3 \text{ is an odd integer} $ and we will compute the Frobenius map $F^*$ on the set $\mathcal{B}(r,s)$ for each $0\leq r+s \leq n-2$ with $r\geq0, \, s\geq 0.$ First, we analyze vanishing of $F^*$ on $\mathcal{B}(0,0)$  and then see the situation for $\mathcal{B}(r,s)$ for $r+s\geq 1$ by putting some extra conditions on the constants $\lambda_0, \cdots, \lambda_{n-2}$. 
 \vskip 0.3cm
 $\underline{\mathbf{Case1 }}:F^* \, \text{ on } \, \mathcal{B}(0,0)$
  \vskip 0.3cm
 For $(\alpha_0, \ldots, \alpha_n) \in S(0,0)$,
 
 \begin{equation} \label{eq2}
\begin{split}
	F^*( x_0^{-\alpha_0}\cdots x_n^{-\alpha_n}) & =  (f_0\cdots f_{n-2}) x_0^{-2\alpha_0} \cdots x_n^{-2\alpha_n} \\
 & =  \sum_{\substack{\rho \in \operatorname{Sym}(\{0,\ldots,n \}) \\ \rho = (\rho_0\rho_1)(\rho_2\cdots \rho_n)  \\ \text{the sequence } \left\{ \rho_i \right\}^{n}_{i=2}\\ \text{ decreases at most twice} \\ \text{ and if } \rho_i>\rho_{i+1} \text{ for some } i, \\ \text{ then either } \rho_{i+1}=0 \text{ or } \rho_{i+1}=1 } } h_{\rho}  x_{\rho_0}^{-2\alpha_{\rho_0}}x_{\rho_1}^{-2\alpha_{\rho_1}} x_{\rho_2}^{-2\alpha_{\rho_2}+m} \cdots x_{\rho_n}^{-2\alpha_{\rho_n}+m}
	\end{split}
	\end{equation}
 in $H^1(X,\mathcal{O}_X)$ where $h_{\rho} = h_{\rho}(\lambda_0, \ldots , \lambda_{n-2}) \neq 0 $ as \[ h_{\rho}= \begin{cases} 
      \lambda_i & \text{ if } \rho_i = 0 \text{ for some }i\geq2 , \\
      1 & \text{ otherwise}.
   \end{cases}
\] Let us define sets $S_{\rho}= \{\rho_2, \cdots, \rho_n \}$ for each $h_{\rho}$ in the eqn. \ref{eq2}. Note that if $S_\rho = S_{\rho'} = S_{\rho''} $, then we have either $\rho=\rho'$ or $\rho=\rho''$ because the sequence $ \left\{ \rho_i \right\}^{n}_{i=2}$ decreases at most twice to zero or one. If we sum up the coefficients of same (Laurent) monomial, we see that the sum $$\displaystyle\sum_{\rho}^{}h_{\rho}  x_{\rho_0}^{-2\alpha_{\rho_0}}x_{\rho_1}^{-2\alpha_{\rho_1}} x_{\rho_2}^{-2\alpha_{\rho_2}+m} \cdots x_{\rho_n}^{-2\alpha_{\rho_n}+m} $$ becomes $\sum_{}^{}a_Ix^I$ where \[ a_I= \begin{cases} 
      h_{\rho}+ h_{\rho'} & \text{ if } \rho_i = 0 =\rho'_j \text{ for some }i,j\geq2 \text{ with } i\neq j \text{ and } S_\rho = S_{\rho'}, \\ h_{\rho} & \text{ if } \rho_i = 0 \text{ for some }i\geq2 \text{ and }  S_\rho \neq S_{\rho'} \text{ for any } \rho' \neq \rho,
      \\  1 & \text{ otherwise}.
   \end{cases}
\]
As $a_I \neq 0$, there is no cancellation in the sum \ref{eq2}. We observe that $F^*( x_0^{-\alpha_0}\cdots x_n^{-\alpha_n})= 0 $ if and only if at least one of the terms $-2\alpha_{\rho_i} +m \geq 0 $ for $\rho$ and $i=2,\ldots, n$ in each summation of the sum (\ref{eq2}), if and only if at least one of the terms $ \alpha_{\rho_i} \leq \frac{m-1}{2}$ ($m$ is odd) for $\rho$ and $i=2,\ldots, n$ in each summation of the sum (\ref{eq2}). This is the case when at least three of  $ \alpha_{i}$ are less than or equal to $\frac{m-1}{2}$. We define the set $T(0,0) \subset S(0,0)$ as 
  \[
   T(0,0) = \left\{  (a_0,a_1,\ldots,a_n) \in \mathbb{N}^{n+1} \ \middle\vert \begin{array}{l}
 \displaystyle\sum_{j=0}^{n} a_j = (n-1)m, \   0<a_j \leq m \text{ for } j=0,\cdots ,n, 
  \\
  0< \alpha_{j_1},\ldots \alpha_{j_s} \leq (m-1)/2 \text{ for } s\geq 3
  \end{array}  \right\}. \]
Then $\operatorname{Card}(T(0,0)) = \operatorname{Card}(S(0,0)) -\binom{n+1}{n-1}T_{n-1}(0,0) + \binom{n+1}{n}T_{n}(0,0)- \binom{n+1}{n+1}T_{n+1}(0,0) $ where $T_i(0,0)$ is the number of non-negative integer solutions of $ \displaystyle\sum_{j=0}^{n} \alpha_j = (n-1)m-(n+1) $ such that at least $i$ of $ \alpha_0, \ldots, \alpha_n$ is greater than or equal to $(m-1)/2$ for $i\geq n-1$. Therefore 
$$ T_i(0,0) = \binom{n+1}{i} \operatorname{Card} \left\{ 
    \begin{array}{lr}
        (\alpha_0,\ldots, \alpha_n):\displaystyle\sum_{j=0}^{n} \alpha_j = (n-1)m-(n+1)-i(m-1)/2
    \end{array}
\right\}.$$ Hence $T_i(0,0)= \binom{n+1}{i} \binom{(n-1)m-i(m-1)/2-1}{n}$ for $i\geq n-1$.
\vskip 0.3cm
 $\underline{\mathbf{Case2}}:F^* \, \text{ on } \, \mathcal{B}(r,s) \text{ with } r+s\geq 1$
   \vskip 0.3cm 
For any $ \alpha_r^s(a_0,a_1,\cdots,a_n) \in \mathcal{B}(r,s)$ and $f_{t-2} = \lambda_{t-2} x_0^m + x_1^m + x_{t}^m \text{ for } t=2,\cdots,n$, we have 
 \begin{align*}
F^*(\alpha_r^s(a_0,a_1,\cdots,a_n)) & =   \left( \prod_{t=2}^{n} f_{t-2} \right ) (\alpha_r^s(a_0,a_1,\cdots,a_n) )^2   \\& = \left( \prod_{t=2}^{n} f_{t-2} \right) (x_0^{-a_0}x_1^{-a_1} \cdots x_n^{-a_n} )^2 \\&  + \left( \prod_{t=2}^{n} f_{t-2} \right) \left(\sum_{ \substack{l+q=1 }}^{r+s} (-1)^{l+q}x_0^{-a_0 + lm}x_1^{-a_1+qm} \varphi_l^q \right)^2 \\& =0
 \end{align*} 
if and only if we have 
\begin{align}
 &\left( \prod_{t=2}^{n} f_{t-2} \right) (x_0^{- a_0}x_1^{-a_1} \cdots x_n^{-a_n} )^2=0 \label{3.2} \\
 & \left( \prod_{t=2}^{n} f_{t-2} \right) \left(\sum_{ \substack{l+q=1 }}^{r+s} (-1)^{l+q}x_0^{-a_0 + lm}x_1^{-a_1+qm} \varphi_l^q \right)^2 =0 \label{3.3}
\end{align}
because each sum in the equality \eqref{3.3} contains a term of either $x_t^{-2a_t-2m}$ or $x_t^{-2a_t-m}$ for some $t\geq 2$, but any sum in the equality \eqref{3.2} has a term of  
either $x_t^{-2a_t+m}$ or $x_t^{-2a_t}$ for all $t\geq 2$, and so there is no cancellation between the two sums. We now determine conditions on $(a_0, \cdots, a_n)$ so that $$ \left( \prod_{t=2}^{n} f_{t-2} \right) (x_0^{- a_0}x_1^{-a_1} \cdots x_n^{-a_n} )^2=0$$ by putting some restrictions on $ [\lambda_0: \cdots :\lambda_{n-2}] \in {\mb P}^{n-2} $. We assume $\frac{n-2}{2}<r+s \leq n-2$ and so we have $n-1\leq 2r+2s < 2a_0 + 2a_1$. Hence we may find a pair $(l_r,q_s)$ of non-negative integers such that $l_r+q_s=n-1, \, l\leq 2r \text{ and } q\leq 2s$ and define coefficients $A_r^s(\lambda_0, \cdots , \lambda_{n-2})$ for $r\geq 1$ as 
\[ A_r^s(\lambda_0, \cdots , \lambda_{n-2})=\displaystyle\sum_{2\leq i_1<\cdots < i_{l_r}\leq n \,}^{} \displaystyle\prod_{j=1}^{l_r}\lambda_{i_j-2}
\]
and we choose $ A_r^s(\lambda_0, \cdots , \lambda_{n-2})=1 $ for $r=0$.
If we assume $A_r^s(\lambda_0, \cdots , \lambda_{n-2}) \neq 0$, then we have that the Laurent monomial  
$$ A_r^s(\lambda_0, \cdots , \lambda_{n-2})x_0^{-2a_0+l_rm}x_1^{-2a_1+q_sm}
\displaystyle\prod_{j=2}^{n}x_j^{-2a_j} \neq 0 $$ in the sum \eqref{3.2}. Therefore, we obtain $  F^*(\alpha_r^s(a_0,a_1,\cdots,a_n)) \neq 0 $ for $\frac{n-2}{2} < r+s \leq n-2$.
We now assume $1\leq r+s \leq \lfloor\frac{n-2}{2} \rfloor$ and try to solve $F^*(\alpha_r^s(a_0,a_1,\cdots,a_n))=0 $ for $(a_0,\cdots,a_n) \in S(r,s)$. We assume following setup: 
\begin{align*}
  & I _{b+c} = \{ i_1, \cdots, i_{b+c}\} \subset \{ 2, \cdots, n \} \text{ for any }  b\leq 2r+1, \,  c \leq 2s+1 \\ & B_{b}^{c}(\lambda_0, \cdots , \lambda_{n-2} ) = \displaystyle\sum_{
  \{t_1,\cdots, t_b \} \subset I_{b+c} }^{} \, \left ( \prod_{j=1}^{l} \lambda_{t_j-2} \right) \neq 0 \text{ for } b \geq 1  \\& 
 B_{b}^{c}(\lambda_0, \cdots , \lambda_{n-2} ) =1  \text{ for } b=0.
\end{align*}
We next consider the following monomial 
\[
B_{b}^{c} \cdot x_0^{-2a_0 + bm} x_1^{-2a_1 + cm} \left( \prod_{t=1}^{b+c} x_{i_t}^{-2a_{i_t}}\right) \left( \prod_{\substack{j \neq i_t, \, j\geq 2 \\ t=1,\cdots, b+c }}^{n} x_j^{-2a_j + m}\right) 
\]
in \eqref{3.2} and determine whether it is zero or not. Hence, it is zero if and only if we have $-2a_j+m \geq 0$ for some $j\in \{ 2, \cdots , n\} \setminus I_{b+c}$ if and only if $0<a_j < \frac{m}{2}$ for some $j\in \{ 2, \cdots , n\} \setminus I_{b+c}$. If we consider all possible subsets $ I_{b+c} $, then the condition that $ a_j < \frac{m}{2}$ for at least $(n-b-c)$ $j$ values is necessary and sufficient for the equality 
\[
B_{b}^{c} \cdot x_0^{-2a_0 + bm} x_1^{-2a_1 + cm} \left( \prod_{t=1}^{b+c} x_{i_t}^{-2a_{i_t}}\right) \left( \prod_{\substack{j \neq i_t, \, j\geq 2 \\ t=1,\cdots, b+c }}^{n} x_j^{-2a_j + m}\right) =0
\]
for each $I_{b+c} $. For that reason, we define the following sets for the pair $(r,s)$:
 \[
   T_1(r,s) = \left\{  (a_0,a_1,\ldots,a_n) \in \mathbb{N}^{n+1} \ \middle\vert \begin{array}{l}
  rm< a_0 \leq \left(\frac{2r+1}{2}\right)m, \,  sm< a_1 \leq \left(\frac{2s+1}{2}\right)m
 \\ \text{ } \\  0< a_t < \frac{m}{2} \text{ for at least } (n-2r-2s) \text{ } t \text{ values with } t\geq 2
   \\  \displaystyle\sum_{j=0}^{n} a_j = (n-1)m
  \end{array}  \right\},
\]
\[
   T_2(r,s) = \left\{  (a_0,a_1,\ldots,a_n) \in \mathbb{N}^{n+1} \ \middle\vert \begin{array}{l}
  rm< a_0 \leq \left(\frac{2r+1}{2}\right)m, \,  \left(\frac{2s+1}{2}\right)m< a_1 \leq (s+1)m
 \\ \text{ } \\  0< a_t < \frac{m}{2} \text{ for at least } (n-2r-2s-1) \text{ } t \text{ values with } t\geq 2
   \\  \displaystyle\sum_{j=0}^{n} a_j = (n-1)m
  \end{array}  \right\},
\] 
\[
   T_3(r,s) = \left\{  (a_0,a_1,\ldots,a_n) \in \mathbb{N}^{n+1} \ \middle\vert \begin{array}{l}
 \left(\frac{2r+1}{2}\right)m< a_0 \leq (r+1)m, \,  sm< a_1 \leq  \left(\frac{2s+1}{2}\right)m
 \\ \text{ } \\  0< a_t < \frac{m}{2} \text{ for at least } (n-2r-2s-1) \text{ } t \text{ values with } t\geq 2
   \\  \displaystyle\sum_{j=0}^{n} a_j = (n-1)m
  \end{array}  \right\},
\]  
\[
   T_4(r,s) = \left\{  (a_0,a_1,\ldots,a_n) \in \mathbb{N}^{n+1} \ \middle\vert \begin{array}{l}
 \left(\frac{2r+1}{2}\right)m< a_0 \leq (r+1)m, \,  \left(\frac{2s+1}{2}\right)m< a_1 \leq (s+1)m
 \\ \text{ } \\  0< a_t < \frac{m}{2} \text{ for at least } (n-2r-2s-2) \text{ } t \text{ values with } t\geq 2
   \\  \displaystyle\sum_{j=0}^{n} a_j = (n-1)m
  \end{array}  \right\}.
\]  

For $ (a_0,a_1,\ldots,a_n) \in T(r,s):= \displaystyle\bigcup_{i=1}^{4} T^i(r,s)$, by above setup, we have 
 
\[
B_{b}^{c} \cdot x_0^{-2a_0 + bm} x_1^{-2a_1 + cm} \left( \prod_{t=1}^{b+c} x_{i_t}^{-2a_{i_t}}\right) \left( \prod_{\substack{j \neq i_t, \, j\geq 2 \\ t=1,\cdots, b+c }}^{n} x_j^{-2a_j + m}\right) =0
\]
 and also for any $b'< b, \, c'< c$ we have
\[
B_{b'}^{c'} \cdot x_0^{-2a_0 + b'm} x_1^{-2a_1 + c'm} \left( \prod_{t=1}^{b'+c'} x_{i_t}^{-2a_{i_t}}\right) \left( \prod_{\substack{j \neq i_t, \, j\geq 2 \\ t=1,\cdots, b'+c' }}^{n} x_j^{-2a_j + m}\right) =0
\]
 as the part  $ \displaystyle\prod_{\substack{j \neq i_t, \, j\geq 2 \\ t=1,\cdots, b'+c' }}^{n} x_j^{-2a_j + m} $ contains $n-b'-c'$ terms $ x_j^{-2a_j + m}$ with $n-b'-c'> n-b-c$.
 Moreover, we note that the part $x_0^{-2a_0 + 2lm}x_1^{-2a_1+2qm} (\varphi_l^q)^2 $ is allowed to multiplied by 
 $x_0^{(b- 2l)m}x_1^{(c-2q)m}$, otherwise it is zero. Therefore, in such a product, one has at least $n-1-(b+c-2l-2q)$ terms $x_j^{m}$ for $j\geq 2$ and since $ n-1-b-c +2l+2q>l+q$ where 
  $$\varphi_l^q = \displaystyle\sum_{2\leq i_1< \cdots <i_{l+q}\leq n}^{} \beta_l^q( i_1,\cdots,i_{l+q}) x_{i_1}^{-a_{i_1}-m}\cdots x_{i_{l+q}}^{-a_{i_{l+q}}-m}\displaystyle\prod_{j\geq2,j\neq i_1,\cdots,i_{l+q}}^{n} x_j^{-a_j}, $$
we have
  
 $$\left( \prod_{t=2}^{n} f_{t-2} \right) \left(\sum_{ \substack{l+q=1 }}^{r+s} (-1)^{l+q}x_0^{-a_0 + lm}x_1^{-a_1+qm} \varphi_l^q \right)^2 =0 .$$

 Therefore, for any $(a_0,a_1, \cdots, a_n) \in S(r,s)$, $$ F^*(\alpha_r^s(a_0,a_1, \cdots, a_n))=0 \text{ if and only if }(a_0,a_1, \cdots, a_n) \in T(r,s). $$
 As a result we derive the following Theorem:
 \begin{theorem} \label{anum}
    Let $X$ be the generalized Fermat $C^m(\lambda_0,\lambda_1,...,\lambda_{n-2}) $ in $ {\mb P}^n$ over algebraically closed field of characteristic $p=2$. We obtain $a$-number  $a(X)$ of $X$  as 
    $$a(X)= \displaystyle \sum_{\substack{r+s=0\\ r\geq 0, \, s\geq 0}}^{\lfloor \frac{n-2}{2} \rfloor} \operatorname{Card}(T(r,s)).$$ where $[\lambda_0: \cdots :\lambda_{n-2}] \in {\mb P}^{n-2} \setminus V\left( \displaystyle\prod_{\substack{r+s= \lceil \frac{n-1}{2} \rceil \\ r\geq0, \, s\geq 0 }}^{n-2}A_r^s \cdot \displaystyle\prod_{\substack{r+s= 1 \\ r\geq0, \, s\geq 0 }}^{\lfloor \frac{n-2}{2} \rfloor }B_r^s  \right)$.
 \end{theorem}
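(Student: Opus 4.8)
The strategy is to promote the case-by-case computations just carried out to a statement about $\dim_k \ker F^*$. Recall from Definition \ref{od} (together with the duality of Lemma \ref{cartier}) that $a(X)=\dim_k\ker\!\big(F^*\colon H^1(X,\mc{O}_X)\to H^1(X,\mc{O}_X)\big)$, and from Theorem \ref{basis} that $\mc{B}=\bigsqcup_{0\le r+s\le n-2}\mc{B}(r,s)$ is a $k$-basis of $H^1(X,\mc{O}_X)$. Put
\[
\mc{B}_0:=\bigsqcup_{\substack{r,s\ge 0\\ r+s\le \lfloor(n-2)/2\rfloor}}\{\alpha_r^s(a):a\in T(r,s)\},\qquad \mc{B}_1:=\mc{B}\setminus\mc{B}_0 .
\]
By Case 1 (the analysis of $F^*$ on $\mc{B}(0,0)$) and Case 2 (the analysis on $\mc{B}(r,s)$ with $r+s\ge 1$), and because $[\lambda_0:\cdots:\lambda_{n-2}]$ avoids the stated hypersurface, one has $F^*(\beta)=0$ exactly when $\beta\in\mc{B}_0$: indeed, for $r+s>\lfloor(n-2)/2\rfloor$ the non-vanishing of $A_r^s$ gives $F^*(\beta)\ne 0$ for every $\beta\in\mc{B}(r,s)$, while for $r+s\le\lfloor(n-2)/2\rfloor$ the non-vanishing of the $B_r^s$ (and, when $r+s=0$, of the sums $h_\rho+h_{\rho'}$, which is automatic in characteristic $2$ since the $\lambda_i$ are distinct and nonzero) gives $F^*(\alpha_r^s(a))=0\iff a\in T(r,s)$. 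Hence $\operatorname{Card}(\mc{B}_0)=\sum_{r+s=0}^{\lfloor(n-2)/2\rfloor}\operatorname{Card}(T(r,s))$.

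Since $H^1(X,\mc{O}_X)=\operatorname{Span}_k(\mc{B}_0)\oplus\operatorname{Span}_k(\mc{B}_1)$ and $\operatorname{Span}_k(\mc{B}_0)\subseteq\ker F^*$, the theorem follows once we prove $\ker F^*=\operatorname{Span}_k(\mc{B}_0)$, equivalently that $F^*$ is injective on $\operatorname{Span}_k(\mc{B}_1)$: then any $v\in\ker F^*$, written $v=v_0+v_1$ with $v_i\in\operatorname{Span}_k(\mc{B}_i)$, satisfies $F^*v_1=F^*v=0$, whence $v_1=0$ and $v\in\operatorname{Span}_k(\mc{B}_0)$. As $k$ is perfect and $F^*$ is $p$-linear, this injectivity is the same as linear independence over $k$ of the images $\{F^*(\beta):\beta\in\mc{B}_1\}$. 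I would first record the simplification that makes this tractable: for every $r,s$ and every $a\in S(r,s)$ one has, in $H^1(X,\mc{O}_X)$,
\[
F^*\!\big(\alpha_r^s(a)\big)=\Big(\prod_{i=0}^{n-2}f_i\Big)\,x_0^{-2a_0}x_1^{-2a_1}\cdots x_n^{-2a_n},
\]
the contributions of the correction terms $x_0^{-a_0+lm}x_1^{-a_1+qm}\varphi_l^q$ of $\alpha_r^s(a)$ all vanishing after reduction — this is exactly identity \eqref{3.2} together with the vanishing of \eqref{3.3} established above, and it holds uniformly in $(r,s)$. So the images in question are the reductions in $H^n(\mb{P}^n,\mc{O}(-(n-1)m))$ of the Laurent polynomials $\big(\prod_{i=0}^{n-2}f_i\big)x^{-2a}$ as $a$ runs over $\bigsqcup_{r+s}\big(S(r,s)\setminus T(r,s)\big)$.

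To prove linear independence of these reductions I would argue by leading terms. Expanding $\prod_{i=0}^{n-2}f_i=\sum_{(A,B,C)}\big(\prod_{i\in A}\lambda_i\big)\,x_0^{|A|m}x_1^{|B|m}\prod_{i\in C}x_{i+2}^{m}$ over ordered partitions $(A,B,C)$ of $\{0,\dots,n-2\}$, the surviving monomials of the reduction of $\big(\prod f_i\big)x^{-2a}$ are the vectors $b$ with $b_0=2a_0-|A|m$, $b_1=2a_1-|B|m$, and $b_j=2a_j-m$ if $j-2\in C$, $b_j=2a_j$ otherwise, all subject to $b_\bullet\ge 1$. I would order monomials by first comparing $|C|$ (the number of variables among $x_2,\dots,x_n$ whose exponent is reduced by $m$), then the pair $(b_0,b_1)$ lexicographically, then the remaining coordinates; for each $a\notin T(r,s)$ the very condition defining $T(r,s)$ is what guarantees that the admissible range of $|C|$ is nonempty and, after $|C|$ and then $b_0$ are maximised, that a valid monomial $\mu(a)$ is produced. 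Its coefficient is an elementary symmetric function of a subset of $\{\lambda_0,\dots,\lambda_{n-2}\}$, a nonzero polynomial, hence nonvanishing under the genericity hypothesis (taken, as in the theorem, to exclude the zero loci of all the relevant $\lambda$-polynomials). Finally $a\mapsto\mu(a)$ is injective: from the exponent vector of $\mu(a)$ one reads off the profile $C=\{j\ge 2:\ b_j\text{ odd}\}$, hence $a_2,\dots,a_n$; then $|B|$ from $b_1$ and the profile, hence $a_1\bmod m$; and then $a_0$ — and with it the individual $r,s$ — from $\sum a_j=(n-1)m$ and the constraints $rm<a_0\le(r+1)m$, $sm<a_1\le(s+1)m$. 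A standard triangularity argument then yields the independence of $\{F^*(\beta):\beta\in\mc{B}_1\}$, and the theorem follows.

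The main obstacle is precisely this last step. One must verify simultaneously that $\mu(a)$ really is the $\prec$-maximal surviving monomial of the reduction of $\big(\prod f_i\big)x^{-2a}$ (so that no competing partition $(A',B',C')$, and no monomial of a different $F^*(\beta')$, produces something at least as large), that $a\mapsto\mu(a)$ is injective on the whole of $\bigsqcup_{r+s}\big(S(r,s)\setminus T(r,s)\big)$, and that the finitely many coefficients forced to be nonzero stay off the excluded locus in $\mb{P}^{n-2}$. This is a careful but elementary bookkeeping in the combinatorics of the sets $S(r,s)$ and $T(r,s)$ and of how the "small versus large exponent" dichotomy interacts with squaring and with multiplication by $\prod_{i=0}^{n-2}f_i$.
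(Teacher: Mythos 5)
Your first half reproduces what is in effect the paper's entire proof: the paper's argument for Theorem \ref{anum} consists precisely of the Case 1/Case 2 computations, showing that under the stated genericity of $[\lambda_0:\cdots:\lambda_{n-2}]$ a basis element $\alpha_r^s(a)$ is annihilated by $F^*$ exactly when $a\in T(r,s)$ (and never when $r+s>\lfloor\frac{n-2}{2}\rfloor$, via $A_r^s\neq 0$), and it then reads the formula for $a(X)$ off from this count; the injectivity of $F^*$ on the span of the remaining basis vectors is not discussed in the paper at all. You are right that, since $F^*$ is only $p$-linear, counting annihilated basis vectors gives a priori only $a(X)\ge\sum_{r+s=0}^{\lfloor (n-2)/2\rfloor}\operatorname{Card}(T(r,s))$, and that equality needs the images $\{F^*(\beta):\beta\in\mathcal{B}_1\}$ to be linearly independent. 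So the step you single out is genuinely the missing ingredient --- but your proposal does not supply it, and this is where the gap lies.

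Concretely, two things go wrong in your attempt to close it. First, the simplification $F^*(\alpha_r^s(a))=\bigl(\prod_{i=0}^{n-2}f_i\bigr)x_0^{-2a_0}\cdots x_n^{-2a_n}$ ``uniformly in $(r,s)$'' is not what the Case 2 analysis gives: the vanishing of the contribution \eqref{3.3} of the correction terms $x_0^{-a_0+lm}x_1^{-a_1+qm}\varphi_l^q$ is proved there only for $a\in T(r,s)$, using exactly the smallness conditions $a_t<\frac{m}{2}$ that define $T(r,s)$; for $a\notin T(r,s)$ --- which is the index set of $\mathcal{B}_1$, the only place you need the simplification --- those terms need not reduce to zero, so your leading-monomial bookkeeping would have to carry them along, and the proposed order (by $|C|$, then $(b_0,b_1)$, then the rest) is not shown to remain triangular in their presence. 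Second, even granting the simplification, the three verifications you list at the end (maximality of $\mu(a)$ among all surviving monomials of all $F^*(\beta')$, injectivity of $a\mapsto\mu(a)$ across the different $S(r,s)\setminus T(r,s)$, and nonvanishing of the leading coefficients off the excluded locus) are exactly the content of the independence claim, and you explicitly leave them unverified. As written, therefore, your argument establishes only the inequality $a(X)\ge\sum\operatorname{Card}(T(r,s))$; to match the theorem you must either complete the triangularity argument with the correction terms retained, or argue, as the paper implicitly does, that the basis-by-basis determination of the action of $F^*$ already yields the exact value.
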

We will finalize this section by finding a lower bound for the $p$-rank $\sigma(X)$ where $X$ is the smooth generalized Fermat curve over algebraically closed field $k$ of characteristic $p=2$ in $ {\mb P}^n $. Recall that the curve $X= C^m(\lambda_1 \lambda_2,...,\lambda_{n-2})$ is defined as

	$$ C^m(\lambda_0,\lambda_1,...,\lambda_{n-2}) := \begin{Bmatrix}  \lambda_0 x_0^m+x^m_1+x^m_2=0\\ \lambda_1 x_0^m+x^m_1+x^m_3=0 \\ \vdots \\ \lambda_{n-2}x_0^m+x^m_1+x^m_n=0\end{Bmatrix} \subset {\mb P}^n $$
where $\lambda_0,\lambda_1,...,\lambda_{n-2}$ are pairwise different elements of field $k$ with $\lambda_i \neq 0 \text{ for } i=0,1,...,n-2$. We set $f_i = \lambda_ix_0^m+x^m_1+x^m_{i+2} $ for $i=0,1,...,n-2$. Using this curve, we define other generalized Fermat curves in lower dimensional projective spaces as 
$$ C^m(\lambda_{i_0},\lambda_{i_1} \cdots, \lambda_{i_t-2}):= \begin{Bmatrix}  \lambda_{i_0} x_0^m+x^m_1+x^m_{i_0+2}=0\\ \lambda_{i_1} x_0^m+x^m_1+x^m_{i_1+2}=0 \\ \vdots \\ \lambda_{i_t-2} x_0^m+x^m_1+x^m_{i_t}=0 \end{Bmatrix} \subset {\mb P}^t $$
for any $1<t<n$ and for any $t-1$ length subset $\{ \lambda_{i_0},\lambda_{i_1} \cdots, \lambda_{i_t-2}\}$ of $ \{ \lambda_0,\lambda_1,...,\lambda_{n-2}\}$.
As a result, we state the following Theorem:
\begin{theorem} \label{prank}
 Let $X$ be the generalized Fermat curve $C^m(\lambda_0,\lambda_1,...,\lambda_{n-2}) $ in $ {\mb P}^n $. We have the following inequality between the $p$-rank of generalized Fermat curves  $$\sigma(X) \geq \displaystyle\sum_{t=2}^{n-1} (-1)^t \displaystyle \sum_{0\leq i_0 < \cdots <i_t\leq n-2}^{}\sigma(C^m(\lambda_{i_0},\lambda_{i_1} \cdots, \lambda_{i_t-2})) $$ 
 for $p=2$.
\end{theorem}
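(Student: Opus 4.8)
The plan is to exploit the $\mu_m$-symmetry of $X$, which in characteristic $p=2$ with $m$ odd is linearly reductive, and thereby split $H^1(X,\mathcal O_X)$ as a Frobenius module into pieces governed by the sub‑generalized‑Fermat curves on the right‑hand side. Write $W=H^1(X,\mathcal O_X)$; by \cite[Proposition~14]{st} the Frobenius acts by $F^*(\alpha)=(f_0\cdots f_{n-2})\,\alpha^{(2)}$, where $\alpha^{(2)}$ is obtained from $\alpha\in H^n(\mathbf P^n,\mathcal O(-(n-1)m))$ by squaring its coefficients and its Laurent monomials, the factor $f_0\cdots f_{n-2}$ of degree $(n-1)m$ bringing the result back into $H^n(\mathbf P^n,\mathcal O(-(n-1)m))$, hence into $W$. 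The group $\Gamma=\prod_{j=2}^{n}\mu_m$ acts on $\mathbf P^n$ by scaling $x_2,\dots,x_n$, fixes each $f_i$, and so acts on $X$ and on $W$; since $p\nmid m$ it is linearly reductive, so $W=\bigoplus_{\chi}W_\chi$ over the characters $\chi$ of $\Gamma$. From the basis $\mathcal B$ of Theorem~\ref{basis} one reads off that each $\alpha_r^s(a_0,\dots,a_n)$ is a $\Gamma$-eigenvector of weight $(-a_2,\dots,-a_n)\bmod m$ (all Laurent monomials occurring in it have exponents congruent mod $m$), and that no basis vector has weight $0$: indeed $1\le a_j\le m$ for $j\ge2$ would force $a_j=m$ for all such $j$, contradicting $\sum_j a_j=(n-1)m$ with $a_0,a_1\ge1$. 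Grouping the characters by their support $S_0=\{\,j-2:\ \chi\text{ is nontrivial on the }x_j\text{-factor}\,\}\subseteq\{0,\dots,n-2\}$ gives $W=\bigoplus_{S_0}N_{S_0}$ with $N_\emptyset=0$. Since $F^*$ is $p$-semilinear and commutes with $\Gamma$ (functoriality of Frobenius), $F^*(W_\chi)\subseteq W_{\chi^p}$, and $p\nmid m$ gives $\operatorname{supp}(\chi^p)=\operatorname{supp}(\chi)$; hence each $N_{S_0}$ is a Frobenius submodule, and by additivity of the $p$-rank over direct sums of Frobenius modules $\sigma(X)=\sum_{S_0}\sigma(N_{S_0})$, every term being $\ge 0$. (One may also verify $F^*$ preserves the support grading directly from \eqref{eq2}.)

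Next I would identify the partial sums of this decomposition with the sub‑curves. For nonempty $S\subseteq\{0,\dots,n-2\}$ set $X_S=C^m(\lambda_i:i\in S)\subset\mathbf P^{|S|+1}$; the linear projection $\mathbf P^n\dashrightarrow\mathbf P^{|S|+1}$ forgetting the coordinates $x_{i+2}$ with $i\notin S$ restricts to a finite morphism $\pi_S:X\to X_S$ of degree $m^{\,n-1-|S|}$, because over a general point of $X_S$ each omitted $x_{i+2}$ is an $m$-th root of $-\lambda_i x_0^m-x_1^m$. As $X_S$ is again a smooth generalized Fermat curve (its parameters are still pairwise distinct and nonzero), hence normal, and as $\deg\pi_S$ equals the order of the reductive subgroup $\Gamma_S=\prod_{i\notin S}\mu_m\subseteq\Gamma$ acting on the omitted coordinates, the $\Gamma_S$-invariant map $\pi_S$ is the quotient $X\to X/\Gamma_S\cong X_S$. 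Therefore $\pi_S^*$ identifies $H^1(X_S,\mathcal O_{X_S})$ Frobenius‑equivariantly with $W^{\Gamma_S}=\bigoplus_{S_0\subseteq S}N_{S_0}$, with $\tfrac1{\deg\pi_S}\pi_{S*}$ a Frobenius‑equivariant retraction; consequently $\sigma(X_S)=\sum_{S_0\subseteq S}\sigma(N_{S_0})$ for every such $S$.

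Finally, Möbius inversion over the Boolean lattice of subsets of $\{0,\dots,n-2\}$ turns the alternating sum in the statement, $\sum_{t=2}^{n-1}(-1)^t\sum_{0\le i_0<\cdots<i_{t-2}\le n-2}\sigma\big(X_{\{i_0,\dots,i_{t-2}\}}\big)$, into $(-1)^{n+1}\sum_{1\le|S_0|\le n-2}\sigma(N_{S_0})$; since each $\sigma(N_{S_0})\ge0$ and $\sigma(X)=\sigma\big(N_{\{0,\dots,n-2\}}\big)+\sum_{1\le|S_0|\le n-2}\sigma(N_{S_0})\ge\sum_{1\le|S_0|\le n-2}\sigma(N_{S_0})\ge0$, the claimed inequality follows at once (equivalently, $\bigoplus_{1\le|S_0|\le n-2}N_{S_0}=\sum_{|S|=n-2}\pi_S^*H^1(X_S,\mathcal O_{X_S})$ is a Frobenius submodule of $W$, so $\sigma(X)$ dominates its $p$-rank). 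The step I expect to be the main obstacle is the geometric identification $X/\Gamma_S\cong X_S$ together with the comparison $\pi_S^*H^1(X_S,\mathcal O_{X_S})=\bigoplus_{S_0\subseteq S}N_{S_0}$: this requires the smoothness (hence normality) of each $X_S$, the exact degree of $\pi_S$, and matching the span of the basis vectors of $\mathcal B$ supported on $S$ with the $\Gamma_S$-invariant part of $W$. The verification that $F^*$ respects the support grading, and the final combinatorial bookkeeping, are routine.
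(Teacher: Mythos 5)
Your proposal is correct in outline and reaches the stated inequality, but by a genuinely different route than the paper. The paper works directly with the explicit basis and the formula $F^*(\alpha)=(f_0\cdots f_{n-2})\,\alpha^2$: given a tuple $(a'_{i_0},\dots,a'_{i_t})$ indexing a basis vector of a sub-curve that survives all iterates of $F^*$, it pads the tuple with $a_j=m$ in the omitted coordinates and shows, by a no-cancellation analysis (splitting $\prod_j f_j$ into the sub-curve factors times $\prod_j x_j^m$ plus the remaining products), that the corresponding basis vector of $X$ also survives all iterates; the inequality is then asserted via inclusion--exclusion over subsets of $\{\lambda_0,\dots,\lambda_{n-2}\}$. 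You instead decompose $W=H^1(X,\mathcal{O}_X)$ equivariantly under $\Gamma=\mu_m^{\,n-1}$ (legitimate since $p=2$ and $m$ is odd), check that $F^*$ preserves the support grading, identify $W^{\Gamma_S}$ Frobenius-equivariantly with $H^1(X_S,\mathcal{O}_{X_S})$ via $X_S\cong X/\Gamma_S$ (using that $\deg\pi_S=m^{\,n-1-|S|}$ is odd, hence prime to $p$), and finish by M\"obius inversion. What your route buys: exact formulas $\sigma(X)=\sum_{S_0}\sigma(N_{S_0})$ and $\sigma(X_S)=\sum_{S_0\subseteq S}\sigma(N_{S_0})$, from which the inequality is immediate; it also shows the right-hand side equals $(-1)^{n+1}\sum_{1\le|S_0|\le n-2}\sigma(N_{S_0})$, so the bound is trivial for even $n$ and is exactly the ``proper-support'' part of $\sigma(X)$ for odd $n$; and it sidesteps the delicate point in the basis-by-basis argument that non-vanishing of $F^*$-iterates on individual basis vectors does not by itself compute the stable rank, nor does it make the final inclusion--exclusion step explicit. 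What the paper's approach buys: it stays entirely within the computational apparatus already set up in Section 3 (the sets $S(r,s)$, the basis $\mathcal{B}$, the formula for $F^*$) and needs no quotient-curve geometry. The work remaining in your plan is exactly what you flag: smoothness of each $X_S$ (its parameters stay pairwise distinct and nonzero), faithfulness of the $\Gamma_S$-action and the identification $X/\Gamma_S\cong X_S$, and the matching of $\pi_S^*H^1(X_S,\mathcal{O}_{X_S})$ with $\bigoplus_{S_0\subseteq S}N_{S_0}$ --- all standard given that $|\Gamma_S|$ is invertible in $k$, together with the observation (which you make via Theorem~\ref{basis}) that the trivial-weight piece $N_\emptyset$ vanishes, without which the M\"obius bookkeeping would not close.
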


\begin{proof}
   For any $t-1$ length subset $\{ \lambda_{i_0},\lambda_{i_1} \cdots, \lambda_{i_t-2}\}$ of $ \{ \lambda_0,\lambda_1,...,\lambda_{n-2}\}$ and for any $(a'_{i_0},\cdots, a'_{i_t}) \in S'(r,s) $, set similar to $S(r,s)$ and defined for $C^m(\lambda_{i_0},\lambda_{i_1} \cdots, \lambda_{i_t-2}) $ , we consider elements $(a_{0},\cdots, a_{n}) \in S(r,s) $ of the form \[ a_j= \begin{cases} 
      a'_{i_j} & \text{ if } j= 0,1, \\  a'_{i_q} & \text{ if } j\geq 2 \text{ and }, j=i_q \text{ for some } q\in \{2,\cdots, t\},
      \\  m & \text{ otherwise}.
   \end{cases}
\]
We assume $(F^*)^l(\alpha_r^s(a'_{i_0},\cdots, a'_{i_t}))\neq 0$ for all $l \geq 1$, i.e., 
$ \alpha_r^s(a'_{i_0},\cdots, a'_{i_t})$ counts for the $p$-rank of $ C^m(\lambda_{i_0},\lambda_{i_1} \cdots, \lambda_{i_t-2})  $ and so $ (F^*)^l(\alpha_r^s(a'_{i_0},\cdots, a'_{i_t}))= \displaystyle\sum_{j}^{} \beta_j \alpha_{r_j}^{s_j}(a^{l_j}_{i_0},\cdots, a^{l_j}_{i_t})$, a finite linear combination of basis elements for the curve $ C^m(\lambda_{i_0},\lambda_{i_1} \cdots, \lambda_{i_t-2})$. We will show that $$ (F^*)^l(\alpha_r^s(a_{0},\cdots, a_{n}))= \displaystyle\sum_{j}^{} \beta_j \alpha_{r_j}^{s_j}(a^{l_j}_{0},\cdots, a^{l_j}_{n}) + (\text{ other part }) \neq 0 $$ for each $l$ and so the $ t+1$ tuple $(a'_{i_0},\cdots, a'_{i_t}) \in S'(r,s) $ counts for the $p$-rank \\
$\sigma(C^m(\lambda_0,\lambda_1,...,\lambda_{n-2}))$. We have 
\begin{align}
   F^*(\alpha_r^s(a_0,a_1,\cdots,a_n)) & =   \left( \prod_{j=0}^{n-2} f_{j} \right ) (\alpha_r^s(a_0,a_1,\cdots,a_n) )^2 \nonumber \\&= \left( \prod_{q=0}^{t} f_{i_q} \right )  \left( \prod_{\substack{j=2, \, j \neq i_q \\ 0\leq q \leq t}}^{n} f_{j-2} \right ) (\alpha_r^s(a_0,a_1,\cdots,a_n) )^2 \nonumber  \\& = \left( \prod_{q=0}^{t} f_{i_q} \right )  \left( \prod_{\substack{j=2, \, j \neq i_q \\ 0\leq q \leq t}}^{n} x^m_{j} \right ) (\alpha_r^s(a_0,a_1,\cdots,a_n) )^2 \label{3.4} \\&  +  \left( \prod_{q=0}^{t} f_{i_q} \right )  \left( \text{ other product} \right ) (\alpha_r^s(a_0,a_1,\cdots,a_n) )^2 \label{3.5} \\&= \displaystyle\sum_{j}^{} \beta_j \alpha_{r_j}^{s_j}(a^{1_j}_{0},\cdots, a^{1_j}_{n}) + (\text{ other part }). \nonumber
\end{align} 
We have a term of the form $x_j^{-m}$ for each $j$ with $j\neq i_q$ in each sum of \eqref{3.4}, but there is at least one term of the form $ x_j^{-cm}, \, c\geq 2$ for some $j$ with  $j\neq i_q$ in each sum of \eqref{3.5}. Therefore, there is no cancellation between the terms of \eqref{3.4} and \eqref{3.5}. As a result, we obtain 
$$ (F^*)^l(\alpha_r^s(a_{0},\cdots, a_{n}))= \displaystyle\sum_{j}^{} \beta_j \alpha_{r_j}^{s_j}(a^{1_j}_{0},\cdots, a^{1_j}_{n}) + (\text{ other part }) \neq 0 $$ and if we apply the Frobenius map $F^*$ iteratively, infact we see that $$ (F^*)^l(\alpha_r^s(a_{0},\cdots, a_{n}))= \displaystyle\sum_{j}^{} \beta_j \alpha_{r_j}^{s_j}(a^{l_j}_{0},\cdots, a^{l_j}_{n}) + (\text{ other part }) \neq 0 $$ for each $l$. Hence, by using Inclusion-Exclusion principle, we show that $$\sigma(X) \geq \displaystyle\sum_{t=2}^{n-1} (-1)^t \displaystyle \sum_{0\leq i_0 < \cdots <i_t\leq n-2}^{}\sigma(C^m(\lambda_{i_0},\lambda_{i_1} \cdots, \lambda_{i_t-2})). $$ 
\end{proof}

\vskip 0.5cm
$$ {\mb P}^n $$
\section{A family of singular curves}
 Now we will work on singular generalized Fermat curves. In example\ref{ges}, there is a family of singular curves on which we determine  the $p$-rank and the $a$-number of the smooth model of any given curve in the family in terms of the corresponding invariants of the given curve. 
\begin{example}[Singular Generalized Fermat Curve]\label{ges}
Let $X'$ be the singular curve  \\
$ C^m(1,1, \lambda_2,...,\lambda_{n-2})$ $ \lambda_i \neq 0,1$ for $i= 2,\ldots, n-2$ and $X$ be the smooth model of $X'$. Let $ p$ be the characteristic of field $k$ with $(p,m)=1$. In this example we will observe relations between $p$-ranks $\sigma(X),\sigma(X') $ and between $a$-numbers $a(X),a(X')$. Let $\gamma$ be the principal $2m$-th root of unity and $ \zeta $ be the principal $m$-th root of unity. Set $\mu_j= \gamma \zeta^j$ for $j=1,\ldots,m$. Recall that $P$ is a singular point of $X'$ if and only if $ \operatorname{Rank}(\frac{\partial f_i}{\partial x_j}(P)) < n-1$. This is the case when
$$P=[1:\mu_i:0:0:(\lambda_2 -1)^{1/m}\mu_{\lambda(2)}: \cdots :(\lambda_{n-2} -1)^{1/m}\mu_{\lambda(n-2)}]$$ where $i=1,\ldots,m$ and $\lambda(j)=1,\ldots,m$ for $j=2,\ldots, n-2$.
Let $\widehat{\mathcal{O}}_P$ be the completion of the local ring $\mathcal{O}_P$. In the completion ring $\widehat{\mathcal{O}}_P$, one  may write the polynomials 
$$\begin{Bmatrix} g_0=1+y^m_1+y^m_2\\  g_1= 1+y^m_1+y^m_3 \\ g_2= \lambda_2 +y^m_1+y^m_4 \\ \vdots \\ g_{n-2} = \lambda_{n-2} +y^m_1+y^m_n\end{Bmatrix}$$
as
$$\begin{Bmatrix} g_0= (1-\mu_1y_1)h_1+y^m_2\\  g_1=(1-\mu_1y_1)h_1+y^m_3 \\ g_2= \lambda_2 + (y_4 -\mu_1y_1)h_2 \\ \vdots \\ g_{n-2} =\lambda_{n-2}+ (y_n -\mu_1y_1)h_{n-2} \end{Bmatrix}$$
Where $g_i$ is dehomogenization of $f_i$ for $i= 0,\ldots,n-2$ and $h_j(P) \neq 0$ for $j= 1,\ldots, n-2$. Therefore, we realize the isomorphism 
$$ \widehat{\mathcal{O}}_P \cong k[[ x,y]]/(x^m-y^m).$$
As the polynomial $x^m-y^m$ contains $m$ distinct linear factors, each singularity at $P$ contributes exactly $m$ factors of $\mathbb{G}_m$ in linear algebraic group $G$ attached to the short exact sequence 
$$  0 \longrightarrow G \longrightarrow J_{X'} \longrightarrow J_X \longrightarrow 0 .$$ 
Therefore, one has $G= \mathbb{G}_m^{(n-2)^m(m-1)}$ (\cite[Chapter V, Section 17]{S1}). As a result, we obtain the relations 
$$ \left\{ 
    \begin{array}{lr}
        a(X)=a(X')\\
      \sigma(X)= \sigma(X') - (n-2)^m(m-1)
    \end{array}
\right\}  .$$
\hfill{$\Box$}
 \end{example}


\begin{thebibliography}{}
		    \bibitem{A}
			J. D. Achter, and E. W. Howe, \textit{Hasse--Witt and Cartier--Manin matrices: A warning and a request},  arXiv:1710.10726v5 [math.NT], 2020.
   \bibitem{AS}
   F. Arslan, and S. Sertöz, \textit{Genus calculations of complete intersections}, Communications in Algebra, 26:8, 2463-2471, 1998.
			\bibitem{Ca}
			P. Cartier, \textit{Questions de rationalité des diviseurs en géométrie algébrique}, Bull. Soc. math. France, 86, 1968, p. 177 à 251.
			
			\bibitem{C}
			D. A. Cox, J. B. Little, and H. K. Schenck, \textit{Toric Varieties}, American Mathematical Society, Providence, RI, 2011.
			 \bibitem{GV}
   A. Garcia, J. F. Voloch, \textit{Fermat curves over finite fields}, J. Number Theory 30 no. 3 (1988), 345-356.
			\bibitem{H}
			R. Hartshorne, \textit{Algebraic Geometry}, Springer-Verlag, New York Inc. 1977.
			\bibitem{Hi}
    R. A. Hidalgo, \textit{Holomorphic differentials of generalized Fermat curves}, J. Number Theory 217 (2020), 78–101.
           \bibitem{Kh}
  A. G. Khovanskii, \textit{Newton polyhedra and the genus of complete intersections}, Funct Anal Its Appl 12, 38–46, 1978.
			\bibitem{K}
			K. Komiya, \textit{Algebraic curves with non-classical types of gap sequences for genus three and four}, Hiroshima Math. J. 8 (1978), 371-400.
   
   
				\bibitem{M}
			J. I. Manin, \textit{The Hasse–Witt matrix of an algebraic curve}, Amer. Math. Soc. Transl. (2), 45:245–264, 1965. Translated by J.W. S. Cassels.
			\bibitem{N}
               A. Neeman, \textit{Weierstrass points in characteristic $p$}, Invent. Math. 75 (1984), 359-376.
			\bibitem{O}
		    F. Oort, \textit{Commutative Group Schemes}, LNM 15, Springer-Verlag, Berlin-New York 1966.
		\bibitem{S2}
			J.P.Serre, \textit{Faisceaux Algebriques Coherents},  Ann. of Maths., 61, 1955, p. 197-278.    
		 \bibitem{S1} 
			J.P.Serre, \textit{Groupes Algebriques et Corps de Classes}, Hermann, Paris. 1959.
			
			\bibitem{SV}
			K. Stöhr, J. F. Voloch, \textit{The p-Rank of Artin-Schreier Curves}, Manuscripta Math. 16 (1975), 169-193.
			\bibitem{Su}
			D. Surao, \textit{A formula for the Cartier operator on plane algebraic curves}, J. Reine Angew. Math.377 (1987), 49-64.
   \bibitem{st}
		S. Terzi, \textit{On the $p$-rank of curves,} arXiv:2001.03310v2 [math.AG], 2021.
			\bibitem{T}
			P. Turbek, \textit{An explicit family of curves with trivial automorphism groups}, Proceedings of the American Mathematical Society, Volume 122, Number 3, November 1994.
			\bibitem{VZ}
   J. F. Voloch, M.E. Zieve, \textit{Rational points on some Fermat curves and surfaces over finite fields}, Int. J. Number Theory 10 no. 2 (2014), 319-325. 
			\bibitem{Y}
			N. Yui, \textit{On the Jacobian varieties of hyperelliptic curves over fields of characteristic p $>$ 2}, J. Algebra, 52(2):378–
            410, 1978.

          
			\end{thebibliography}
	\end{document}